\def\A{\boldsymbol{\mathcal{E}}}
\def\Ak{\boldsymbol{\mathcal{E}}_{\mathbf{K}}}
\def\tA{\widetilde{\boldsymbol{\mathcal{E}}}}
\def\Z{\mathbb{Z}}
\def\TT{\mathbb{T}}
\def\K{\mathbf{K}}
\def\H{\mathbf{H}}
\def\U{\mathbf{U}}
\def\SH{\mathbf{S}\ddot{\mathbf{H}}}
\def\qed{$\hfill \checkmark$}
\def\N{\mathbb{N}}
\def\tto{\twoheadrightarrow}
\def\a{\alpha}
\def\C{\mathbb{C}}
\def\x{\mathbf{x}}
\def\y{\mathbf{y}}
\def\w{\mathbf{w}}
\def\z{\mathbf{z}}
\def\p{\mathbf{p}}
\def\v{\nu}
\def\t{\tau}
\def\ut{\underline{u}}
\def\tu{\underline{u}}
\newtheorem{theo}{\bf{Theorem}}
\newtheorem{lem}{Lemma}
\newtheorem{cor}{Corollary}
\newtheorem{prop}{Proposition}
\numberwithin{lem}{section}
\numberwithin{conj}{section}
\numberwithin{prop}{section}
\numberwithin{cor}{section}
\numberwithin{equation}{section}
\numberwithin{claim}{section}
\title{Drinfeld realization of the elliptic Hall algebra}
\author{Olivier Schiffmann}
\thanks{\noindent *
Institut Math\'ematique de Jussieu, 175 rue du Chevaleret, 75013 Paris, FRANCE, 
email:\;\texttt{olive@math.jussieu.fr}}
\begin{document}
\maketitle

\begin{abstract}
We give a new presentation of the Drinfeld double $\A$ of the (spherical) elliptic Hall algebra $\A^+$ introduced in our previous work \cite{BS}.
This presentation is similar in spirit to Drinfeld's `new realization' of quantum affine algebras. This answers, in the case of elliptic curves, a question
of Kapranov concerning functional relations satisfied by (principal, unramified) Eisenstein series for $GL(n)$ over a function field. It also provides
proofs of some recent conjectures of Feigin, Feigin, Jimbo, Miwa and Mukhin (\cite{F2JM2}).
\end{abstract}

\tableofcontents
\setlength{\unitlength}{10pt}

\section{Introduction}

\vspace{.15in}

In the seminal paper \cite{Kap}, M. Kapranov initiated the study of the so-called Hall algebra $\H_X$ of the category of coherent sheaves over a smooth
projective curve $X$ (defined over a finite field). It may be interpreted in the language of automorphic forms as an algebra of (unramified) 
automorphic forms for the collection of groups $GL(n)$ ($n \geq 1$) over the function field of $X$, with the product being given by the parabolic induction
$GL(n) \times GL(m) \to GL(n+m)$. On the other hand, the well-known analogy between Hall algebras of abelian categories and quantum groups
(discovered by Ringel \cite{Ringel}) suggests that $\H_X$ should behave like the positive half of some `quantum envelopping algebra'. Indeed, as observed by Kapranov, the functional equations for Eisenstein series (which express the invariance of these Eisenstein series under the action of the Weyl group, and which are quadratic relations) resemble very much the relations appearing in Drinfeld's 'new realization' of quantum affine algebras (see \cite{Drin}). In fact, they coincide precisely with these relations when $X=\mathbb{P}^1$ (see \cite{Kap}, \cite{BK}). As soon as $X$ is of genus at least one, however, the functional equations do \textit{not} suffice to determine the Hall algebra $\H_X$, and one has to look for some new, higher rank `functional equations' satisfied by Eisenstein series.

\vspace{.1in}

The aim of this paper is to determine these higher rank functional equations when $X$ is an elliptic curve. More precisely, we consider the Drinfeld double $\U_X$ of the \textit{spherical} Hall algebra $\U_X^+$ of $X$, i.e the (Drinfeld double of the) subalgebra of $\H_X$ generated by the characteristic functions of the connected components of the Picard groups $Pic^d(X)$ together with the characteristic functions of the stacks of torsion sheaves on $X$. We show that $\U_X$ can be
abstractly presented as an algebra generated by the above elements subject to the standard (quadratic) functional equations plus one set of \textit{cubic} relations (see Theorem~\ref{T:3}, equations (\ref{E:NEW})). Our method is rather brutal and very specific to elliptic curves; it is based on a combinatorial realization of $\U_X$ given in our previous work \cite{BS} in terms of lattice paths in $\Z^2$. We nevertheless expect that our result can be extended to the whole Hall algebra $\H_X$ (of an elliptic curve) and its Drinfeld double $\mathbf{D}\H_X$.

\vspace{.1in}

As it turns out, the (spherical) elliptic Hall algebra $\U_X$, or rather its rational form $\A$ appears in several other guises : as shown in \cite{SV} it projects to the spherical Cherednik algebra
$\SH_n$ for any $n \in \Z$ and as such may be thought of as the stable limit $\SH_{\infty}$ of these Cherednik algebras; as shown in \cite{SV2}, $\A$ may be identified with a convolution algebra acting on the (equivariant) K-theory of Hilbert schemes of points in $\C^2$ and its positive half $\A^>$ admits a realization as
a shuffle algebra of Feigin-Odesskii type. Finally, the algebra $\A$ (and the presentation given in Theorem~\ref{T:3}) also appears in recent work of Feigin, Feigin, Jimbo, Miwa and Mukhin (see \cite{F2JM2}, \cite{F2JM2II} and \cite{FT}) where its representation theory is studied. Our results verify some of the conjectures
presented in \textit{loc. cit} (see Section~4.2).

\vspace{.1in}

The plan of the paper is the following~: after introducing the algebra $\U_X$, its rational form $\A$ and after recalling their relation to DAHAs and shuffle algebras (Sections 1-3) we state our main theorem giving a Drinfeld realization of $\A$ in Section~4. Section~5 is dedicated to the (elementary but intricate) proof of that result. We conclude with some formulas for the Hopf algebra structure of $\A$, in terms of our new `Drinfeld' realization. 

\vspace{.2in} 

\section{Recollections on the elliptic Hall algebra}

\vspace{.1in}

\paragraph{\textbf{1.1.}} We begin with some recollections concerning the spherical Hall algebra $\U_X^+$ of an elliptic curve $X$ defined over a finite field $\mathbb{F}_q$ and its Drinfeld double $\U_X$. We refer to \cite{BS} for details. Let $Coh(X)$ be the category of coherent sheaves over $X$, and let $\mathcal{I}$ be the set of isomorphism classes of objects in $Coh(X)$. There is a partition $\mathcal{I}=\sqcup_{r,d} \mathcal{I}_{r,d}$ according to the rank and the degree of a coherent sheaf.
Set 
$$\H_X[r,d]=\{f : \mathcal{I}_{r,d} \to \C\;|\; \#(supp\;f)<\infty\} =\bigoplus_{\mathcal{F} \in \mathcal{I}_{r,d}} \C 1_{\mathcal{F}},$$
and put $\H_{X}=\bigoplus_{r,d} \H_{\x}[r,d]$, where the sums runs over all possible pairs $(r,d)$, i.e. over $(\Z^2)^+=\{(r,d)\:|\; r >0\;\text{or}\;r=0, d >0\}$.
To a triple $(\mathcal{F},\mathcal{G},\mathcal{H})$ of coherent sheaves we associate the finite set
$$\mathcal{P}_{\mathcal{F},\mathcal{G}}^\mathcal{H}=\{(\phi, \psi)\;|\;
0 \to \mathcal{G} \stackrel{\phi}{\to} \mathcal{H} \stackrel{\psi}{\to} \mathcal{F} \to 0\; \text{is}\;\text{exact}\}$$
and we set ${P}_{\mathcal{F},\mathcal{G}}^\mathcal{H}= \# \mathcal{P}_{\mathcal{F},\mathcal{G}}^\mathcal{H}$.
We write $a_{\mathcal{K}}=
\#\mathrm{Aut}(\mathcal{K})$ for a coherent sheaf  $\mathcal{K}$. Define an associative product
on $\mathbf{H}_{X}$ by the formula
\begin{equation}\label{E:Hallprod}
1_{\mathcal{F}}\cdot 1_{\mathcal{G}}=v^{-\langle \mathcal{F},\mathcal{G} \rangle}
\sum_{\mathcal{H}} \frac{1}{a_{\mathcal{F}}a_{\mathcal{G}}}{P}_{\mathcal{F},\mathcal{G}}^\mathcal{H} 1_{\mathcal{H}},
\end{equation}
and a coassociative coproduct by the formula
\begin{equation}\label{E:Hallcoprod}
\Delta\bigl(1_{\mathcal{H}}\bigr)=\sum_{\mathcal{F},\mathcal{G}} v^{-\langle \mathcal{F},\mathcal{G}\rangle}\frac{{P}_{\mathcal{F},\mathcal{G}}^\mathcal{H}}{a_{\mathcal{H}}}
 1_{\mathcal{F}} \otimes 1_{\mathcal{G}}.
\end{equation}
There is a natural pairing on $\H_X$ given by
$$\bigl(1_{\mathcal{F}},1_{\mathcal{G}}\bigr)=\delta_{\mathcal{F},\mathcal{G}}
\frac{1}{a_{\mathcal{F}}}.$$
Then $(\H_X, \cdot, \Delta)$ is a (topological) bialgebra (see e.g. \cite{SLectures}). Moreover, the pairing $(\;,\;)$ is a Hopf pairing and it is nondegenerate, making $\H_X$ a self-dual bialgebra. 

\vspace{.1in}

For $d \in \Z$ let $1_{Pic^d(X)}=\sum_{\mathcal{L} \in Pic^d(X)} 1_{\mathcal{L}}$ be the characteristic function of the Picard variety of degree $d$; for $l \geq 1$ let
$1_{Tor^l(X)}=\sum_{\mathcal{T} \in Tor^l(X)}1_{\mathcal{T}}$ be the characteristic function of the set of torsion sheaves of degree $l$; more generally, for any $(r,d) \in (\Z^2)^+$ let $1^{ss}_{r,d}$ be the characteristic function of the set of semistable sheaves of rank $r$ and degree $d$.  By definition, the \textit{spherical Hall algebra} $\U^+_X$ of $X$ is the subalgebra of $\H_X$ generated by $\{1_{Pic^d(X)}\;|d \in \Z\} \cup \{1_{Tor^l(X)}\;|\; l \in \N\}$. One shows that it is a sub bialgebra of $\H_X$, and that it contains the element $1^{ss}_{r,d}$ for any $r,d$. 

\vspace{.1in}

Finally, let $\U_X$ be the Drinfeld double of $\U^+_X$; it is a an algebra generated by two copies $\U_X^+$ and $\U_X^-$ of the spherical Hall algebra $\U^+_X$ (see e.g. \cite[Section~3]{BS} for more details). If $u$ belongs to the spherical Hall algebra, we denote by $u^+$ and $u^-$ the corresponding elements in $\U^+_X$ and $\U^-_X$.

\vspace{.1in}

We refer to \cite{Kap} for the interpretation of $\H_X$ or $\U_X$ in the language of automorphic forms over function fields.

\vspace{.2in}

\paragraph{\textbf{1.2.}} The spherical Hall algebras $\U^+_X$ and $\U_X$ admit the following combinatorial presentation, obtained in \cite{BS}. Let $\sigma, \overline{\sigma}$ be the two Weil numbers of $X$, i.e. the Frobenius eigenvalues in $H^1(\overline{X}, \overline{\mathbb{F}_q})$.
For $i \geq 1$ we put
$$\a_i=\a_i(\sigma, \overline{\sigma})=(1-\sigma^i)(1-\overline{\sigma}^i)(1-(\sigma \overline{\sigma})^{-i})/i.$$

\vspace{.1in}

Let us set $(\Z^2)^*=\Z^2 \backslash \{(0,0)\}$,
$$(\Z^2)^+=\{ (p,q) \in \Z^2\;|\; p > 0 \;\text{or}\; p=0, q>0\},$$
$$(\Z^2)^>=\{ (p,q) \in \Z^2\;|\; p > 0\}.$$
and $ (\Z^2)^-=-(\Z^2)^+, (\Z^2)^<=-(\Z^2)^>$. 
For any $\x=(p,q) \in (\Z^2)^*$ we put $deg(x)=g.c.d. (p,q) \in \N$.
Finally, for a pair of non-collinear 
$(\x,\y) \in (\Z^2)^*$ we set $\epsilon_{\x,\y}=sign(det(\x,\y)) \in \{\pm 1\}$. We also let ${\Delta}_{\x,\y}$ stand for the triangle in $\Z^2$ with vertices $\{(0,0), \x, \x+\y\}$.

\centerline{
\begin{picture}(28,12)
\multiput(6,3.5)(2,0){8}{\circle*{.2}}
\multiput(6,1.5)(2,0){8}{\circle*{.2}}
\multiput(6,5.5)(2,0){8}{\circle*{.2}}
\multiput(6,7.5)(2,0){8}{\circle*{.2}}
\multiput(6,9.5)(2,0){8}{\circle*{.2}}
\put(6,5.5){\circle*{.3}}
\put(8,9.5){\circle*{.3}}
\put(14,3.5){\circle*{.3}}
\put(16,7.5){\circle*{.3}}
\put(5,6){\tiny{$(0,0)$}}
\put(7,8.5){{$\mathbf{y}$}}
\put(13,2.5){{$\mathbf{x}$}}
\put(16.5,6.5){{$\mathbf{x}+\y$}}
\put(6,5.5){\line(5,1){10}}
\put(6,5.5){\line(4,-1){8}}
\put(14,3.5){\line(1,2){2}}
\end{picture}}
\centerline{Figure 1. The triangle ${\Delta}_{\x,\y}$}

\vspace{.15in}

\noindent
\textbf{Definition.} Let $\A_X$ be the $\C$-algebra generated by elements $\{u_{\x}\;|\x \in (\Z^2)^*\}$ modulo the following set of relations~:
\begin{enumerate}
\item[i)] If $\x,\x'$ belong to the same line in $\Z^2$ then
$$[u_\x,u_{\x'}]=0,$$
\item[ii)] Assume that $\x,\y \in (\Z^2)^*$ are such that $deg(\x)=1$ and that 
${\Delta}_{\x,\y}$ has no interior lattice point. Then 
$$[u_\y,u_{\x}]=\epsilon_{\x,\y}
\frac{\theta_{\x+\y}}{\a_1}$$
where the elements $\theta_{\z}$, $\z \in (\Z^2)^*$ are obtained by equating the Fourier
coefficients of the collection of relations
\begin{equation}\label{E:formulatheta}
\sum_i \theta_{i\x_0}s^i=exp\bigg(\sum_{r \geq 1}\a_ru_{r\x_0}s^r\bigg),
\end{equation}
for any $\x_0 \in (\Z^2)^*$ such that $deg(\x_0)=1$ (observe that $\theta_{\z}/\a_1=u_{\z}$ if $deg(\z)=1$). 
\end{enumerate}

\vspace{.15in} 

The relation with spherical Hall algebras is given in the following theorem. Define elements $T_{r,d}$ in $\U^+_X$ for $(r,d) \in (\Z^2)^+$ via the following equations
$$1+\sum_{l \geq 1} 1^{ss}_{l\x}=exp\bigg(\sum_{l \geq 1} \frac{1}{[l]}T_{l\x} s^l\bigg)$$
for $\x$ a \textit{primitive} vector. We also set $c_i=\a_i/(v^{-1}-v)$.

\vspace{.1in}

\begin{theo}[\cite{BS}] The assignement 
$$T_{1,d}^\pm \mapsto c_1u_{\pm 1, \pm d}, \qquad T_{0,l}^{\pm} \mapsto c_l u_{0, \pm l}$$
extends to an isomorphism of algebras $\Phi: \U_X \stackrel{\sim}{\to} \A_X$. Moreover we have $\Phi(T_{\x}^\pm)= c_{deg(\x)} u_{\x}$ for any $\x \in (\Z^2)^+$.
\end{theo}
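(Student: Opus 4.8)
The plan is to define $\Phi$ on the two given families of generators, extend it to an algebra homomorphism by verifying that the images obey the relations (i) and (ii) defining $\A_X$, and finally to establish bijectivity by matching explicit spanning families. The conceptual device that makes everything work is special to genus one: since $X$ is an elliptic curve, the Euler form on the numerical Grothendieck group $\Z^2=\{(r,d)\}$ is, by Riemann--Roch, the \emph{antisymmetric} form $\langle(r,d),(r',d')\rangle=rd'-r'd$, so its symmetrization vanishes identically. In the Drinfeld double $\U_X$ this forces the grouplike (Cartan) elements to act trivially, which is exactly what will allow the would-be Heisenberg commutators to collapse and the cross relations to take the clean root-vector form of (ii).

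First I would set up the $SL_2(\Z)$-symmetry. The group of exact autoequivalences of $D^b(\mathrm{Coh}(X))$ acts on $\Z^2$ through $SL_2(\Z)$ (preserving the determinant form), and this lifts to an action on $\U_X$ by algebra automorphisms permuting the slopes. Since every primitive $\x$ lies in the $SL_2(\Z)$-orbit of $(1,0)$ or $(0,1)$, and since the $T_{l\x}$ are manufactured from the semistable indicators $1^{ss}_{l\x}$, this symmetry lets me transport the prescribed images $c_1u_{\pm1,\pm d}$ and $c_lu_{0,\pm l}$ to candidate values $c_{\mathrm{deg}(\x)}u_\x$ for \emph{every} $\x\in(\Z^2)^*$, and to normalize any relation by moving the distinguished vector to $(1,0)$. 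Relation (i) then splits into two cases. For two classes on the same ray it is the commutativity of the positive (resp.\ negative) spherical subalgebra along a fixed slope, which is a ring of power sums in the $T_{l\x}$; for classes on opposite rays it is precisely the collapse of the mixed commutator forced by the vanishing of the symmetrized Euler form. Both give $[u_\x,u_{\x'}]=0$.

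The crux is relation (ii). After using $SL_2(\Z)$ to put $\x=(1,0)$, so that $u_\x$ represents a line-bundle class, I would write $[u_\y,u_\x]$ through the defining (double) commutation relations of $\U_X$ and evaluate it in terms of Hall structure constants $P^{\mathcal{H}}_{\mathcal{F},\mathcal{G}}$ between semistable objects of the two slopes determined by $\x$ and $\y$. The hypotheses $\mathrm{deg}(\x)=1$ and that $\Delta_{\x,\y}$ contains no interior lattice point are exactly what keep this computation short: via Pick's formula the emptiness condition forces $\Delta_{\x,\y}$ to be as small as its slopes permit, bounding the Euler pairing $|\langle\x,\y\rangle|$ and the relevant $\Hom$ and $\Ext^1$ dimensions so tightly that a single extension class controls the bracket. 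Carrying this out and re-expressing the resulting class of type $\x+\y$ through the generating identity (\ref{E:formulatheta}) should yield precisely $\epsilon_{\x,\y}\theta_{\x+\y}/\a_1$. This explicit evaluation of Hall numbers in the empty-triangle configuration, together with the bookkeeping of the signs $\epsilon_{\x,\y}$ and of the scalars $c_i=\a_i/(v^{-1}-v)$ under $SL_2(\Z)$-transport, is where essentially all of the work lies and is the main obstacle; I would expect to organize it by induction on $\mathrm{deg}(\y)$, reducing general $\y$ to the primitive case through (\ref{E:formulatheta}).

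Finally I would prove bijectivity. Surjectivity is immediate once the relations hold, since the images of the algebra generators of $\A_X$ are, up to the invertible scalars $c_i$, the generators $T_{1,d}^\pm,T_{0,l}^\pm$ of $\U_X$. For injectivity I would introduce a PBW-type spanning family of $\A_X$ indexed by finitely supported functions $(\Z^2)^*\to\N$, realized as ordered products of the $u_\x$ along convex lattice paths, and show using only (i)--(ii) that these span. On the other side $\U_X$ carries the analogous family coming from Harder--Narasimhan filtrations, and the nondegeneracy of the Hopf pairing on $\H_X$ restricted to $\U_X$ guarantees its linear independence; since $\Phi$ carries one family onto the other, comparison forces injectivity, hence an isomorphism. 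The closing identity $\Phi(T_\x^\pm)=c_{\mathrm{deg}(\x)}u_\x$ for arbitrary $\x\in(\Z^2)^+$ then follows by applying the $SL_2(\Z)$-symmetry to the two generating cases $(1,d)$ and $(0,l)$.
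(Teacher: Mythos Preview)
The paper does not give its own proof of this theorem: it is quoted from \cite{BS} as a background result, with no argument supplied in the present text. Consequently there is nothing here to compare your proposal against directly.

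That said, your outline is essentially the strategy actually carried out in \cite{BS}. The ingredients you name---the $SL_2(\Z)$-action on $D^b(\mathrm{Coh}(X))$ via Fourier--Mukai transforms used to normalize slopes, the antisymmetry of the Euler form in genus one (so that the twist/Cartan part of the double is trivial), the explicit Hall-number computation for relation~(ii) in the empty-triangle configuration controlled by Pick's formula, and the bijectivity argument comparing the convex-path basis of $\A_X$ (Proposition~\ref{P:01}) with the Harder--Narasimhan basis of $\U_X^+$ together with nondegeneracy of the Hopf pairing---are precisely those used there. Your acknowledgement that the evaluation of Hall numbers for~(ii) is ``where essentially all of the work lies'' is accurate: in \cite{BS} this step relies on Atiyah's classification of indecomposable bundles on an elliptic curve and is the technical heart of the argument. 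One small clarification of direction: since $\A_X$ is the algebra given by generators and relations while $\U_X$ is defined concretely, the map you are really constructing and checking well-definedness of is the inverse $\A_X\to\U_X$, sending $u_\x$ to $c_{\deg(\x)}^{-1}T_\x$; the $\Phi$ of the statement is then its inverse after bijectivity is established.
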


\vspace{.15in}

Observe that $\A_X$ carries a natural $SL(2,\Z)$-symmetry. It corresponds to an action of $SL(2,\Z)$ by Fourier-Mukai transforms on the derived category $D^b(Coh(X))$. Rather than $\A_X$, we will use the rational form $\Ak$ of $\A_X$ defined over the field $\mathbf{K}=\C(\sigma, \overline{\sigma})$, where $\sigma, \overline{\sigma}$ are now formal parameters. Since the ground field will always be $\mathbf{K}$, we will drop the index $\mathbf{K}$ from the notation. 

\vspace{.1in}

We denote by $\A^{\pm}$ the subalgebra of $\A$ generated by $u_{\x}$ for $\x \in (\Z^2)^{\pm}$. It is shown in \cite[Section~5]{BS}. that relations i) and ii), restricted to $(\Z^2)^{\pm}$ give a presentation of $\A^{\pm}$, and that there is a decomposition
\begin{equation}\label{E:D1}
\A \simeq \A^+  \otimes \A^-.
\end{equation}
It is also shown in \cite[Section~5]{BS} that $\A$ is actually generated by elements $u_{r,d}$ with $-1 \leq r \leq 1$; similar results hold for the subalgebras $\A^+$ and $\A^-$.

\vspace{.1in}

We will need to refine (\ref{E:D1}) slightly~: let $\A^{>}$, resp. $\A^{<}$, resp. $\A^{0}$, be the subalgebras of $\A$ generated by the elements $\{{u}_{1,l}\;|\; l \in \Z\}$, resp. $\{{u}_{-1,l}\;|\; l \in \Z\}$, resp. $\{{u}_{0,l}\;|\; l \in \Z^*\}$. Then from (\ref{E:D1}) and the defining relations of $\A$ one deduces the triangular decomposition \begin{equation}\label{E:D2}
\A \simeq \A^{>}  \otimes \A^{0} \otimes \A^{<}
\end{equation}
(see the proof of Lemma~\ref{L:1} below).

\vspace{.2in} 

\section{Link to spherical DAHAs}

\vspace{.15in}

\paragraph{\textbf{2.1.}}In this section, which is included for completeness, we briefly describe the relation between the elliptic Hall algebra $\A$ and
spherical DAHAs of type $GL(n)$. Let $\ddot{\H}_n$ be the double affine Hecke algebra of type $GL(n)$ and parameters
$t=\sigma^{-1}, q=\overline{\sigma}^{-1}$. This is an algebra generated by elements $X_i^{\pm 1}, Y_i^{\pm 1}, T_j$ for $i=1, \ldots, n$ and $j=1, \ldots, n-1$,
subject to a set of relations which we won't write here (see \cite{SV}, Section~2). Let $S$ stand for the maximal idempotent in the finite Hecke
algebra $\H_n$ generated by $T_1, \ldots, T_{n-1}$ and put $\SH=S \ddot{\H}_n S$. There is a well-known action of the group $SL(2,\Z)$ on $\SH_n$ by algebra automorphisms, and we may use it to define a unique collection of elements $P^{(n)}_{r,d}$ for $(r,d) \in \Z^2 \backslash \{0\}$ satisfying
$P^{(n)}_{0,d}=S \sum_i Y_i^d S$ if $d >0$ and
$$P^{(n)}_{\gamma(r,d)}=\gamma \cdot P^{(n)}_{r,d}$$
for all $\gamma \in SL(2,\Z)$.

\vspace{.1in}

The following result is proved in \cite[Theorem~3.1]{SV}~:

\vspace{.1in}

\begin{theo}\label{T:1} For any $n$ there is a surjective algebra morphism 
$\Psi_n~: \A \tto \SH_n$ given by $$ u_{r,d} \mapsto \frac{1}{q^{deg(r,d)}-1}P^{(n)}_{r,d}.$$ 
Moreover we have
$\bigcap_{n} Ker\; \Psi_n=\{0\}$.
\end{theo}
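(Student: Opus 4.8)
The plan is to define $\Psi_n$ on the generators $u_{r,d}$ by the stated formula and then to verify that the images in $\SH_n$ satisfy the relations i) and ii) of the presentation of $\A$; surjectivity and the asymptotic injectivity are handled afterwards. The organising principle throughout is $SL(2,\Z)$-equivariance. By construction the elements $P^{(n)}_{r,d}$ transform by $\gamma\cdot P^{(n)}_{r,d}=P^{(n)}_{\gamma(r,d)}$, and the scalars $q^{\deg(r,d)}-1$ are $SL(2,\Z)$-invariant because $\deg$ is; hence $\Psi_n$ is \emph{a priori} $SL(2,\Z)$-equivariant, and it suffices to check each relation on one representative of each $SL(2,\Z)$-orbit. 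Since $SL(2,\Z)$ acts transitively on primitive vectors, this is a genuine reduction.

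For relation i) I would show that collinear generators land in a common commutative subalgebra. In the vertical direction $P^{(n)}_{0,d}=S(\sum_i Y_i^d)S$; as the $Y_i$ pairwise commute in $\ddot{\H}_n$ and $\sum_i Y_i^d$ is $S_n$-symmetric, hence commutes with the symmetriser $S$, the elements $S(\sum_i Y_i^d)S$ for varying $d$ commute with one another. Every other collinear family $\{P^{(n)}_{k\p}\}$ with $\p$ primitive is the image of the vertical one under a suitable $\gamma\in SL(2,\Z)$, and since $\gamma$ acts by algebra automorphisms, commutativity is preserved. This gives i).

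The core of the argument, and the step I expect to be the main obstacle, is relation ii). Using equivariance I would reduce to $\x=(1,0)$, so that ii) reads $[\Psi_n(u_\y),\Psi_n(u_{(1,0)})]=\epsilon_{\x,\y}\,\Psi_n(\theta_{\x+\y})/\a_1$ for $\y$ making $\Delta_{\x,\y}$ empty. The base case is the explicit commutator of the two fundamental operators $S(\sum_i X_i)S$ and $S(\sum_i Y_i)S$, computable directly in $\ddot{\H}_n$ from Cherednik's cross relations between the $X_i$, $Y_i$ and the $T_j$. The genuine difficulty is that the right-hand side involves the \emph{nonlinear} elements $\theta_{\x+\y}$ defined by the exponential series (\ref{E:formulatheta}): when $\deg(\x+\y)>1$ these are not generators but polynomials along a ray. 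I would match (\ref{E:formulatheta}) order by order, arguing by induction on $\deg(\x+\y)$, reducing the higher commutators to already-verified lower ones together with the commutativity from i), and checking that the DAHA computation reproduces exactly the exponential combination of the $P^{(n)}_{k(\x+\y)}$. Tracking the constants $\a_r$ and $q^{\deg}-1$ through this induction is the delicate bookkeeping at the heart of the proof.

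Finally, once the relations hold, $\Psi_n$ is a morphism and surjectivity is easy: the image contains all $P^{(n)}_{0,d}=S(\sum_i Y_i^d)S$ and, via $SL(2,\Z)$, all $P^{(n)}_{d,0}$ (the scalars being invertible), i.e. the spherical symmetric functions in the $Y_i$ and in the $X_i$; by Cherednik's generation theorem these two subalgebras generate $\SH_n$. For $\bigcap_n\ker\Psi_n=\{0\}$ I would use that each $\Psi_n$ is graded for the $\Z^2$-grading by $(r,d)$, so the intersection is a graded two-sided ideal and it suffices to separate each finite-dimensional graded piece of $\A$. Here I would invoke the stability of Macdonald theory: after normalisation the operators $P^{(n)}_{r,d}$ converge as $n\to\infty$ to the action of $\A$ on the ring of symmetric functions $\Lambda$, and this limiting action is faithful; thus an element lying in every $\ker\Psi_n$ acts by zero in the limit and must vanish. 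Making this precise through the triangular decomposition (\ref{E:D2}) and the PBW basis of \cite{BS} completes the argument.
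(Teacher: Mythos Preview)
The paper does not prove this theorem at all: it is stated with the preamble ``The following result is proved in \cite[Theorem~3.1]{SV}'' and no argument is given here. So there is nothing to compare your proposal against in the present paper.

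That said, your outline is broadly the right shape for how the result is actually established in \cite{SV}: the $SL(2,\Z)$-equivariance reduction is the organising principle, relation i) does reduce to the commutativity of symmetric power sums in the $Y_i$, and surjectivity does follow from the fact that $\SH_n$ is generated by the two commutative subalgebras $S\,\K[X_1^{\pm 1},\ldots,X_n^{\pm 1}]^{\mathfrak{S}_n}S$ and $S\,\K[Y_1^{\pm 1},\ldots,Y_n^{\pm 1}]^{\mathfrak{S}_n}S$. You correctly identify relation ii) as the crux: the inductive matching of the exponential generating series (\ref{E:formulatheta}) with explicit DAHA commutators is exactly where the work lies, and in \cite{SV} this is carried out through a careful induction on the lattice triangle together with concrete computations in Cherednik's presentation. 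Your sketch of this step is honest about its difficulty but does not yet contain the mechanism that makes the induction close; in particular, reducing to $\x=(1,0)$ alone still leaves infinitely many $\y$ to treat, and one needs a further finiteness (essentially that only finitely many shapes of empty triangles occur up to the action of the unipotent subgroup) to make the base cases manageable. For the asymptotic injectivity $\bigcap_n \ker\Psi_n=\{0\}$, the argument in \cite{SV} does go through the polynomial (Macdonald) representation and a dimension count on graded pieces, much as you suggest, though the precise statement that the limiting action is faithful is itself something that has to be proved rather than invoked.
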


We may think of $\A$ as a stable limit $\SH_{\infty}$ of $\SH_n$ as $n$ tends to infinity.

\vspace{.2in} 

\section{Shuffle algebra presentation}

\vspace{.15in}

\paragraph{\textbf{3.1.}} We now provide a shuffle realization of the \textit{positive} elliptic Hall algebra $\A^>$. Such a realization was obtained in \cite{SV2}. The same 
shuffle algebra also appears in the work of Feigin and Tsymbaliuk (see \cite{FT}).

\vspace{.1in}

Set 
$$\tilde{\zeta}(z)=\frac{(1-\sigma z)(1-\overline{\sigma}z)(1-(\sigma \overline{\sigma})^{-1}z)}{(1-z)}$$
and put $\tilde{\zeta}(z_1, \ldots, z_r)=\prod_{i<j} \tilde{\zeta}(z_i/z_j)$. Following Feigin and Odesskii (\cite{FO}) we define an associative algebra $\mathbf{A}=\mathbf{A}_{\tilde{\zeta}(z)}$ as follows. Consider the twisted symmetrization operator 
\begin{equation*}
\begin{split}
\Psi_r~: \K[z_1^{\pm 1}, \ldots, z_r^{\pm 1}] & \to \K[z_1^{\pm 1}, \ldots, z_r^{\pm 1}]^{\mathfrak{S}_r}\\
P(z_1, \ldots, z_r) &\mapsto \sum_{\gamma \in \mathfrak{S}_r} \gamma \cdot \big( \tilde{\zeta}(z_1, \ldots, z_r) P(z_1, \ldots, z_r)\big)
\end{split}
\end{equation*} 
and set $\mathbf{A}_r=Im(\Psi_r)$. Note that $\Psi_r$ is a $\K[z_1^{\pm 1}, \ldots, z_r^{\pm 1}]^{\mathfrak{S}_r}$-module morphism. There is a unique
linear map $m_{r,s}: \mathbf{A}_r \otimes \mathbf{A}_s \to \mathbf{A}_{r+s}$ fitting in a commutative diagram
\begin{equation}\label{E:shufflediagram}
\xymatrix{ \K[z_1^{\pm 1}, \ldots, z_r^{\pm 1}] \otimes \K[z_1^{\pm 1}, \ldots, z_{s}^{\pm 1}] \ar[r]^-{\Psi_r \otimes \Psi_{s}} \ar[d]_-{i_{r,s}}^{\sim} & \mathbf{A}_r \otimes \mathbf{A}_{s} \ar[d]_-{m_{r,s}} \\ \K[z_1^{\pm 1}, \ldots, z_{r+s}^{\pm 1}] \ar[r]^-{\Psi_{r+s}} & \mathbf{A}_{r+s}}
\end{equation}
where $i_{r,s}\big( P(z_1, \ldots, z_r) \otimes Q(z_1, \ldots, z_{s})\big)=
P(z_1, \ldots, z_r)Q(z_{r+1}, \ldots, z_{r+s})$.
It is easy to check that the maps $m_{r,s}$ endow the space
$\mathbf{A}=\K  \oplus \bigoplus_{r \geqslant 1} \mathbf{A}_r$
with the structure of an associative algebra.
The product in $\mathbf{A}$ may be explicitly written
as the shuffle operation
\begin{equation*}
\begin{split}
h(z_1, \ldots, z_r) \cdot &f(z_1, \ldots, z_{a})\\
&=\frac{1}{r!s!} Sym_{r+s} \bigg( \hspace{-.1in}\prod_{\substack{1 \leqslant i \leqslant r\\ r+1 \leqslant j \leqslant r+s}} \hspace{-.15in}\tilde{\zeta}(z_i/z_j)\cdot \;h(z_1, \ldots, z_r) f(z_{r+1}, \ldots, z_{r+s})\bigg)
\end{split}
\end{equation*}
where $Sym$ is the usual symmetrization operator. The following is shown in \cite[Theorem~10.1]{SV2}~:

\vspace{.1in}

\begin{theo}\label{T:2}
The assignement $u_{1,d} \mapsto z_1^d \in \mathbf{A}_1$ for $d \in \Z$ extends to an isomorphism of $\K$-algebras $\Phi: \A^> \stackrel{\sim}{\to} \mathbf{A}$.
\end{theo}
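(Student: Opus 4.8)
The plan is to show that $\Phi$ is a well-defined algebra homomorphism, that it is surjective, and that it is injective, the last point being where the real work lies. For well-definedness I would first record a presentation of $\A^{>}$ purely in terms of the generators $u_{1,d}$, $d\in\Z$. Specializing relation ii) to pairs $\x=(1,a)$, $\y=(1,b)$ and using the exponential formula (\ref{E:formulatheta}) to rewrite the resulting $\theta_{(2,a+b)}$ in terms of the $u_{1,\ast}$ and $u_{2,\ast}$ produces a family of \emph{quadratic} relations among the $u_{1,d}$; together with the \emph{cubic} Serre-type relations that complete the presentation of $\A^{>}$ (derived from the presentation of $\A^{+}$ in \cite{BS}) these generate all relations. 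It then suffices to verify that the elements $z_1^d\in\mathbf{A}_1$ satisfy them. The quadratic relations are essentially built into the kernel $\tilde{\zeta}$: since $z_1^a\cdot z_1^b=Sym_2\big(\tilde{\zeta}(z_1/z_2)\,z_1^a z_2^b\big)$, the prescribed functional equation is exactly the identity obtained after clearing the denominator $1-z_1/z_2$ of $\tilde{\zeta}$, the numerator of $\tilde{\zeta}$ reproducing the coefficients $\a_i$ of (\ref{E:formulatheta}). The cubic relations reduce to a single finite computation in $\mathbf{A}_3$, namely the vanishing of one antisymmetrized degree-three shuffle element.

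Surjectivity is the easy half. Iterating the defining diagram (\ref{E:shufflediagram}) shows that the shuffle product of $z_1^{d_1},\ldots,z_1^{d_r}\in\mathbf{A}_1$ equals, up to a nonzero scalar, $\Psi_r(z_1^{d_1}\cdots z_r^{d_r})$. Since the monomials $z_1^{d_1}\cdots z_r^{d_r}$ span $\K[z_1^{\pm 1},\ldots,z_r^{\pm 1}]$ and $\mathbf{A}_r=Im(\Psi_r)$ by definition, these shuffle products already span $\mathbf{A}_r$. Hence $\mathbf{A}$ is generated in degree one, and the image of $\Phi$ contains every $\mathbf{A}_r$, so $\Phi$ is onto.

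For injectivity I would argue through nondegenerate pairings rather than a naive dimension count (the graded pieces $(\mathbf{A}_r)_d$, indexed by rank and total degree, are in fact infinite dimensional, so Hilbert series do not match up termwise). The Drinfeld double carries a Hopf pairing, and via the self-duality of the Hall algebra it restricts to a nondegenerate pairing between $\A^{>}$ and $\A^{<}$. On the other side, $\mathbf{A}$ is naturally a Feigin--Odesskii bialgebra equipped with an explicit pairing given by a multiple residue (constant-term) formula built from $\tilde{\zeta}$. The crux is to promote $\Phi$ to a bialgebra map and to check that it intertwines the two pairings; because $\Phi$ is an algebra homomorphism and the pairings are Hopf pairings determined through the comultiplication, this intertwining reduces to the degree-one generators $u_{1,d}$, where it is a direct computation. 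Granting it, $\Phi(x)=0$ forces $(x,y)=(\Phi x,\Phi y)=0$ for all $y$, whence $x=0$ by nondegeneracy. An alternative, pairing-free route would instead take the PBW/lattice-path basis of $\A^{>}$ from \cite{BS} and prove by a leading-term (triangularity) argument that its images in $\mathbf{A}$ are linearly independent. In either approach the step I expect to be the main obstacle is the precise control of $Im(\Psi_r)$, i.e. the Feigin--Odesskii wheel vanishing conditions dictated by the three zeros $\sigma,\overline{\sigma},(\sigma\overline{\sigma})^{-1}$ of the numerator of $\tilde{\zeta}$; it is exactly here that the genus-one geometry of $X$ enters and governs the size of the shuffle algebra.
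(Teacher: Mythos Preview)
The paper does not prove this theorem at all: it is quoted from \cite[Theorem~10.1]{SV2} as background, so there is no ``paper's own proof'' to compare against. What I can do is flag where your sketch departs from what is actually available at this point in the paper.

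Your well-definedness step rests on a presentation of $\A^{>}$ by the rank-one generators $u_{1,d}$ subject to the quadratic functional equation together with ``cubic Serre-type relations that complete the presentation of $\A^{>}$ (derived from the presentation of $\A^{+}$ in \cite{BS})''. That completeness statement is \emph{not} in \cite{BS}; it is precisely (the positive half of) Theorem~\ref{T:3}, the main result of the present paper, whose proof occupies Section~5. So as written your argument is circular relative to the logical order here. There is no genuine contradiction, since the proof of Theorem~\ref{T:3} does not invoke Theorem~\ref{T:2}; but you cannot cite \cite{BS} for that presentation. If you want a self-contained proof in the spirit of \cite{SV2}, you should instead check the original relations i) and ii) of the definition of $\A$ (for all $\x,\y\in(\Z^2)^{>}$) in the shuffle algebra, which forces you first to identify $\Phi(u_{\x})$ for arbitrary $\x$, not only rank one.

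Your surjectivity argument is correct. For injectivity, the pairing route is the right idea and is essentially how \cite{SV2} proceeds, but two points you pass over are not free: that the Hall pairing restricts to a \emph{nondegenerate} pairing between $\A^{>}$ and $\A^{<}$ (as opposed to $\A^{+}$ and $\A^{-}$), and that the Feigin--Odesskii residue pairing on the shuffle side is well defined and matched with it. Your remark that the bigraded pieces are infinite dimensional, so that no naive dimension count is available, is correct and is indeed why one is pushed toward pairings or wheel conditions.
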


\vspace{.1in}

\noindent
{\textbf{Remark.}} A similar shuffle realization exists for the (spherical) Hall algebra of an arbitrary smooth projective curve (see \cite{SV3}).

\vspace{.2in} 

\section{Drinfeld presentation}

\vspace{.15in}

This section contains our main result, i.e. a presentation of the elliptic Hall algebra $\A$ akin to the 'Drinfeld new realization' of quantum affine algebras.

\vspace{.1in}

\paragraph{\textbf{4.1.}} Consider the formal series
$$\mathbb{T}_1(z)=\sum_l u_{1,l}z^l, \qquad \mathbb{T}_{-1}(z)=\sum_l u_{-1,l}z^l,$$
and
$$\mathbb{T}^+_0(z)=1+\sum_{l \geq 1} \theta_{(0,l)}z^l, \qquad \mathbb{T}^-_0(z)=1+\sum_{l \geq 1} \theta_{(0,-l)}z^{-l}. $$
We introduce the polynomials
$$\chi_1(z,w)=(z-\sigma w)(z-\overline{\sigma}w)(z-(\sigma\overline{\sigma})^{-1}w),$$
$$\chi_{-1}(z,w)=(z-\sigma^{-1}w)(z-\overline{\sigma}^{-1}w)(z-\sigma\overline{\sigma}w).$$
Note that $\chi_{-1}(z,w)=-\chi_1(w,z)$. Finally, we set as usual $\delta(x)=\sum_{l \in \Z} x^l$.

\vspace{.1in}

In the theorem below, the equations are to be understood formally, i.e. as equalities of Fourier coefficients of $z^nw^l$ for all $n, l \in \Z$. The residue operation
is defined as follows~: if $A(z)=\sum_{l \in \Z} a_l z^l$ is a formal series then $Res_z A(z)=a_{-1}$.

\vspace{.15in}

\begin{theo}\label{T:3} The elliptic Hall algebra $\A$ is isomorphic to the algebra generated by the Fourier coefficients of $\TT_{-1}(z), \TT_1(z), \TT_0^+(z)$ and $\TT_0^-(z)$, modulo the following relations, for all $\epsilon, \epsilon_1, \epsilon_2 \in \{1,-1\}$~:
\begin{equation}\label{E:1}
\mathbb{T}_0^{\epsilon_1}(z)\mathbb{T}_0^{\epsilon_2}(w)=\mathbb{T}_0^{\epsilon_2}(w)
\mathbb{T}_0^{\epsilon_1}(z),
\end{equation}
\begin{equation}\label{E:2}
\chi_{\epsilon_1}(z,w)\mathbb{T}^{\epsilon_2}_0(z) \mathbb{T}_{\epsilon_1}(w)=\chi_{-\epsilon_1}(z,w)\mathbb{T}_{\epsilon_1}(w) \mathbb{T}^{\epsilon_2}_0(z),
\end{equation}
\begin{equation}\label{E:3}
\chi_{\epsilon}(z,w)\mathbb{T}_{\epsilon}(z) \mathbb{T}_{\epsilon}(w)=\chi_{-\epsilon}(z,w)\mathbb{T}_{\epsilon}(w) \mathbb{T}_{\epsilon}(z),
\end{equation}
\begin{equation}\label{E:5}
[\mathbb{T}_{-1}(z), \mathbb{T}_1(w)]=\frac{1}{\a_1}\left(\mathbb{T}^-_0(z)\delta\left(\frac{z}{w}\right) -\mathbb{T}^+_0(z)\delta\left(\frac{z}{w}\right)\right),
\end{equation}
together with the cubic relations
\begin{equation}\label{E:NEW}
Res_{z,y,w} \big[(zyw)^m  (z+w)(y^2-zw)\mathbb{T}_\epsilon(z)\mathbb{T}_\epsilon(y)\mathbb{T}_\epsilon(w)\big]=0,\\
\end{equation}
for all $m \in \Z$ and $\epsilon \in \{-1,1\}$.
\end{theo}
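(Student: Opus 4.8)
The plan is to establish the isomorphism by producing a two-sided dictionary between the generators-and-relations description in the Definition of Section~1.2 (which, by the quoted Theorem of \cite{BS}, is isomorphic to $\U_X$) and the presentation stated here. I would first verify that relations (\ref{E:1})--(\ref{E:NEW}) \emph{hold} in $\A$, so that there is a well-defined surjective algebra morphism from the abstractly-presented algebra $\widetilde{\A}$ onto $\A$; then, in the harder direction, I would show this map is injective by checking that relations (\ref{E:1})--(\ref{E:NEW}) already suffice to reconstruct \emph{all} of the defining relations i) and ii) of $\A$ in the variables $u_{\pm1,l}$ and $u_{0,l}$. The key structural input is that $\A$ is generated by the $u_{r,d}$ with $-1\le r\le 1$ (stated at the end of Section~1.2), so that the four series $\TT_{\pm1}, \TT_0^{\pm}$ really do generate everything, and I only need relations among these distinguished generators.

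For the first (easy) direction I would translate each equation into Fourier coefficients and match it against i)--ii). Relation (\ref{E:1}) is the collinearity commutativity i) applied to the line $r=0$, repackaged through the exponential defining $\theta_{(0,l)}$ in (\ref{E:formulatheta}). Relations (\ref{E:2}) and (\ref{E:3}) are the ``straightened'' forms of ii): the point is that $\chi_{\epsilon}(z,w)$ is built from exactly the factors $(1-\sigma u)(1-\overline\sigma u)(1-(\sigma\overline\sigma)^{-1}u)$ that govern $\a_i$ and $\widetilde\zeta$, so clearing denominators in the shuffle product of Theorem~\ref{T:2} converts the Feigin--Odesskii relation into these quadratic functional equations; I would exploit the observation $\chi_{-1}(z,w)=-\chi_1(w,z)$ to see (\ref{E:3}) as the $SL(2,\Z)$-symmetric companion of (\ref{E:2}). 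Relation (\ref{E:5}) is the incarnation of the Drinfeld-double commutator between $\U_X^+$ and $\U_X^-$, i.e. the $\epsilon_{\x,\y}\theta_{\x+\y}/\a_1$ term of ii) in the mixed rank $(1,-1)$ case, with the $\delta$-function encoding the degree bookkeeping. Verifying these is essentially a matching of power-series coefficients using the explicit shuffle/Hall formulas.

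The cubic relation (\ref{E:NEW}) is where the real work lies, and I expect it to be the main obstacle. Its role is to capture precisely those instances of ii) where the triangle $\Delta_{\x,\y}$ \emph{does} have interior lattice points and so is not directly a defining relation; equivalently, it is the higher-rank functional equation that the Introduction advertises as new for genus $\ge 1$. I would prove that (\ref{E:NEW}) holds in $\A$ by passing to the shuffle model $\mathbf{A}$ of Theorem~\ref{T:2}: there $\TT_1(z)\TT_1(y)\TT_1(w)$ becomes an explicit symmetric rational function with denominator controlled by $\widetilde\zeta$, and I would show that multiplying by $(z+w)(y^2-zw)$ kills the relevant residues, i.e. that the symmetrized numerator, weighted by $(zyw)^m(z+w)(y^2-zw)$, has vanishing $(z y w)^{-1}$-residue for every $m$. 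The symmetry factor $(z+w)(y^2-zw)$ should be recognized as the unique (up to scalar) cubic that is antisymmetric under the relevant $\mathfrak{S}_3$-action modulo the $\widetilde\zeta$-poles, which is what forces the residues to cancel.

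The decisive and most delicate step is the converse: showing that (\ref{E:1})--(\ref{E:NEW}) \emph{generate} all defining relations, equivalently that $\widetilde\A$ is no larger than $\A$. I would argue this by a dimension/normal-form count, using the triangular decomposition (\ref{E:D2}) $\A\simeq\A^>\otimes\A^0\otimes\A^<$ and its analogue for $\widetilde\A$. Relation (\ref{E:5}) lets me move every $\TT_{-1}$-coefficient to the right of every $\TT_1$-coefficient at the cost of $\TT_0^{\pm}$-terms, and (\ref{E:2}) lets me normal-order the $\TT_0^{\pm}$ past the $\TT_{\pm1}$, so that $\widetilde\A$ admits a triangular factorization $\widetilde\A^>\otimes\widetilde\A^0\otimes\widetilde\A^<$ surjecting onto (\ref{E:D2}). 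It then suffices to prove the single-sign statement that $\widetilde\A^>$, presented by (\ref{E:3}) and (\ref{E:NEW}) alone, is isomorphic to the shuffle algebra $\mathbf A$ of Theorem~\ref{T:2}; this is the combinatorial heart of the matter and, I anticipate, the step that truly requires the ``brutal'' lattice-path analysis of \cite{BS}. Concretely I would set up a spanning set of $\widetilde\A^>$ indexed by partitions (monomials in the $u_{1,l}$), show using (\ref{E:3}) and (\ref{E:NEW}) that any monomial can be rewritten in a fixed ordered normal form, and then check that the number of normal-form monomials in each bidegree matches $\dim \mathbf{A}_r$ graded by the $z$-degree, forcing the surjection $\widetilde\A^>\tto\mathbf A$ to be an isomorphism. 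The hard part is verifying that (\ref{E:NEW}) is exactly enough—no more, no less—to carry out every necessary straightening, which is precisely the content of the interior-lattice-point cases of relation ii) that the quadratic relations fail to reach.
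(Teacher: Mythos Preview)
Your overall architecture is correct and matches the paper: establish the surjection $\phi:\widetilde{\A}\twoheadrightarrow\A$ by verifying (\ref{E:1})--(\ref{E:NEW}) in $\A$, use (\ref{E:5}) and (\ref{E:2}) to get a triangular decomposition of $\widetilde{\A}$ compatible with (\ref{E:D2}), and reduce to showing $\widetilde{\A}^>\to\A^>$ is injective. The paper does exactly this (Sections~5.2--5.3). Also, your claim that verifying (\ref{E:NEW}) in $\A$ is where ``the real work lies'' is backwards: the paper disposes of this in one line (``easy to check directly''), and the genuine difficulty is entirely in the converse, as you yourself note later.

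Where your proposal diverges, and becomes a real gap, is the mechanism for the injectivity of $\widetilde{\A}^>\to\A^>$. You propose a normal form in the rank-one generators $u_{1,l}$ alone and a dimension count against the shuffle algebra $\mathbf{A}$. The paper does this only for $r=2$ (Section~5.4), where the correct spanning set is already not the naive ordered one (it allows $k\le l+1$ or $k\le l+2$). For $r\ge 3$ the paper does \emph{not} attempt to identify such a normal form. Instead it introduces a genuinely new device you do not mention: for each $\z=(r,d)$ it chooses a \emph{minimal path} $(\x,\z-\x)$ (with $\x$ on the lattice line closest and parallel to $[0,\z]$), and \emph{defines} a lift $\underline{\theta}_{(r,d)}=\alpha_1[\underline{u}_{\x},\underline{u}_{\z-\x}]\in\widetilde{\A}^>$. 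The cubic relation (\ref{E:NEW}) enters precisely to show this is well-defined, i.e.\ independent of the minimal path chosen (Proposition~\ref{P:defmin}); this is where (\ref{E:NEW}) does its work, not in any direct reordering of $u_{1,l}$-monomials. Once lifts $\underline{u}_{\x}$ exist for all $\x$ of rank $<r$, the paper proves that every $\underline{u}_{\p}$ lies in the span $\widetilde{J}$ of convex-path elements by induction on the \emph{area} $a(\p)$ of the path, via a geometric case analysis on lattice points in the triangle $\Delta_{\x,\z-\x}$ (Section~5.7). None of this combinatorial machinery---minimal paths, the inductive construction of higher-rank lifts, or the area induction---appears in your plan, and without it your ``straightening'' step has no engine: you have not said what the normal form is for $r\ge 3$, nor how (\ref{E:3}) and (\ref{E:NEW}) would produce it.
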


\noindent
The proof of the above theorem is given in Section~3. The cubic relations (\ref{E:NEW}) may also be written more simply as follows :
$$[[u_{1,l+1},u_{1,l-1}],u_{1,l}]=[[u_{-1,l-1},u_{-1,l+1}],u_{-1,l}]=0$$
for all $l \in \Z$.
Observe that the $SL(2,\Z)$-symmetry of $\A$ is broken in the above presentation. However, there is still a natural symmetry by the unipotent subgroup $\tiny{{\begin{pmatrix} 1 & \Z \\ 0 & 1 \end{pmatrix}}} \subset SL(2,\Z)$, given by 
\begin{equation}\label{E:unipsym}
{\tiny{{\begin{pmatrix} 1 & n \\ 0 & 1 \end{pmatrix}}}}\cdot u_{\pm 1,l}=u_{\pm 1,l \pm n}, \qquad {\tiny{{\begin{pmatrix} 1 & n \\ 0 & 1 \end{pmatrix}}}} \cdot \theta_{0,l}=\theta_{0,l}.
\end{equation}

\vspace{.2in}

\paragraph{\textbf{4.2.}} The algebra generated by elements $u_{\pm 1,d}$, $\theta_{0,d}$ for $d \in \Z$ subject to the relations (\ref{E:1} --\ref{E:NEW}) was considered in \cite{F2JM2}, where it was coined `quantum continuous $\mathfrak{gl}_{\infty}$'. There the authors conjectured its link to the stable limit $\SH_{\infty}$ of spherical DAHAs on the one hand, and to the shuffle algebra $\mathbf{A}_{\tilde{\zeta}(z)}$ on the other. The combination of Theorems~\ref{T:1},\ref{T:2} with Theorem~\ref{T:3} yields a proof of these conjectures.

\vspace{.2in}

\section{Proof of the main theorem}

\vspace{.1in}

This section contains the details of the proof of Theorem~\ref{T:3}.

\vspace{.15in}

\paragraph{\textbf{5.1.}} We have to introduce several combinatorial notions~:

\vspace{.1in}

\noindent
\textit{Segments and slopes.} We will sometimes call \textit{segment} a nonzero element of $\Z^2$. For a segment $\z$, let $\mu(\z) \in ]-\pi/2, 3\pi/2]$ stand for the angle between the line through $\z$ and the x-axis in $\Z^2$. We will call $\mu(\z)$ the \textit{slope} of $\z$.

\vspace{.1in}

\noindent
\textit{Paths and sequences.} A \textit{sequence} in $\Z^2$ is a finite ordered set $(\x_1, \ldots, \x_n)$ of elements of $\Z^2$. It is helpful to visualize a sequence $(\x_1, \ldots, \x_n)$ as the broken line in $(\Z^2)$ starting at the origin and connecting the vertices $\x_1$, $\x_1+\x_2, \ldots, \x_1+\cdots +\x_n$. We say that a sequence $\mathbf{s}=(\x_1, \ldots, \x_n)$ is equivalent to $\mathbf{s}'=(\x_1', \ldots, \x'_n)$ if $\mathbf{s}'$ can be obtained from $\mathbf{s}$ by successively permuting adjacent entries $\x_i, \x_{i+1}$ \textit{of the same slope}.
We define a \textit{path} in $\Z^2$ to be an equivalence class of sequences as above. We will denote by $\textbf{Path}$ the set of all paths. If $U \subset (\Z^2)^*$ is any subset we will say that a path $\p$ is in $U$ if all of its entries lie in $U$, and we denote by $\textbf{Path}^U$ the set of all such paths.
 
\vspace{.1in}

\noindent
\textit{Convex paths.}
We will say that a path $\p=(\x_1, \ldots, \x_n)$ is \textit{convex} if it satisfies
 $$-\pi/2 < \mu(\x_1) \leq \mu(\x_2) \leq \cdots \leq \mu(\x_n) \leq 3\pi/2,$$
 and we denote by $\textbf{Conv}$ the set of all such paths.  A \textit{(local) convexification} of a path $\p=(\x_1, \ldots, \x_n)$ is a path obtained from $\p$ by replacing two adjacent segments $\x_i, \x_{i+1}$ forming a non-convex subpath (i.e. satisfying $\mu(\x_i) > \mu(\x_{i+1})$) by a convex path lying in the triangle $\Delta$ with sides $\x_i, \x_{i+1}, \x_i+\x_{i+1}$ (see figure 2.). 
 
\centerline{
\begin{picture}(25,10)
\multiput(0,0.5)(1,0){11}{\circle*{.2}}
\multiput(0,1.5)(1,0){11}{\circle*{.2}}
\multiput(0,2.5)(1,0){11}{\circle*{.2}}
\multiput(0,3.5)(1,0){11}{\circle*{.2}}
\multiput(0,4.5)(1,0){11}{\circle*{.2}}
\multiput(0,5.5)(1,0){11}{\circle*{.2}}
\multiput(0,6.5)(1,0){11}{\circle*{.2}}
\multiput(0,7.5)(1,0){11}{\circle*{.2}}
\multiput(0,8.5)(1,0){11}{\circle*{.2}}
\put(1,5.5){\circle*{.3}}
\put(0,5.85){\tiny{$(0,0)$}}
\thicklines
\put(1,5.5){\line(1,0){1}}
\put(2,5.5){\circle*{.3}}
\put(2,5.5){\line(1,1){2}}
\put(2,5.5){\line(2,-1){4}}
\put(4,7.5){\circle*{.3}}
\put(4,7.5){\line(1,-2){2}}
\put(2,5.5){\line(1,-2){2}}
\put(4,1.5){\line(1,1){2}}
\put(6,3.5){\circle*{.3}}
\put(6,3.5){\line(2,1){2}}
\put(8,4.5){\circle*{.3}}
\put(8,4.5){\line(2,-1){2}}
\put(10,3.5){\circle*{.3}}
\put(10,3.5){\line(0,1){2}}
\put(10,5.5){\circle*{.3}}
\put(5.75,4.85){\Small{$\mathbf{p}$}}
\put(3.75,2.85){\Small{$\mathbf{\Delta}$}}
\multiput(15,0.5)(1,0){11}{\circle*{.2}}
\multiput(15,1.5)(1,0){11}{\circle*{.2}}
\multiput(15,2.5)(1,0){11}{\circle*{.2}}
\multiput(15,3.5)(1,0){11}{\circle*{.2}}
\multiput(15,4.5)(1,0){11}{\circle*{.2}}
\multiput(15,5.5)(1,0){11}{\circle*{.2}}
\multiput(15,6.5)(1,0){11}{\circle*{.2}}
\multiput(15,7.5)(1,0){11}{\circle*{.2}}
\multiput(15,8.5)(1,0){11}{\circle*{.2}}
\put(16,5.5){\circle*{.3}}
\put(15,5.85){\tiny{$(0,0)$}}
\thicklines
\put(16,5.5){\line(1,0){1}}
\put(17,5.5){\circle*{.3}}
\put(17,5.5){\line(1,-2){1}}
\put(21,3.5){\line(2,1){2}}
\put(21,3.5){\circle*{.3}}
\put(18,3.5){\circle*{.3}}
\put(18,3.5){\line(1,-1){1}}
\put(19,2.5){\circle*{.3}}
\put(19,2.5){\line(1,0){1}}
\put(23,4.5){\circle*{.3}}
\put(23,4.5){\line(2,-1){2}}
\put(20,2.5){\circle*{.3}}
\put(20,2.5){\line(1,1){1}}
\put(25,3.5){\circle*{.3}}
\put(25,3.5){\line(0,1){2}}
\put(25,5.5){\circle*{.3}}
\put(17.25,2.45){\Small{$\mathbf{p'}$}}
\end{picture}}
\centerline{Figure 2. The triangle $\Delta$ and a local convexification $\p'$ of a path $\p$.}

\vspace{.1in}
 
\noindent
\textit{Numbers.} If $\p=(\x_1, \ldots, \x_n)$ is any path we set $l(\p)=n$ (the \textit{length} of $\p$) and $|\p|=\sum \x_i$ (the \textit{weight} of $\p$). We will also need the notion of the  \textit{area} $a(\p)$ of a path $\p$ in $(\Z^2)^+$, which is defined as follows. Let $\p^{\#}$ be the unique convex path obtained by permuting the entries of $\p$. Then $a(\p)$ is the area of the polygon bounded by $\p$ and $\p^{\#}$~:
 
\centerline{
\begin{picture}(10,10)
\multiput(0,0.5)(1,0){11}{\circle*{.2}}
\multiput(0,1.5)(1,0){11}{\circle*{.2}}
\multiput(0,2.5)(1,0){11}{\circle*{.2}}
\multiput(0,3.5)(1,0){11}{\circle*{.2}}
\multiput(0,4.5)(1,0){11}{\circle*{.2}}
\multiput(0,5.5)(1,0){11}{\circle*{.2}}
\multiput(0,6.5)(1,0){11}{\circle*{.2}}
\multiput(0,7.5)(1,0){11}{\circle*{.2}}
\multiput(0,8.5)(1,0){11}{\circle*{.2}}
\put(1,5.5){\circle*{.3}}
\put(0,5.85){\tiny{$(0,0)$}}
\thicklines
\put(1,5.5){\line(1,0){1}}
\put(2,5.5){\circle*{.3}}
\put(2,5.5){\line(1,2){1}}
\put(3,7.5){\circle*{.3}}
\put(3,7.5){\line(1,-2){1}}
\put(4,5.5){\circle*{.3}}
\put(4,5.5){\line(1,-1){2}}
\put(6,3.5){\circle*{.3}}
\put(6,3.5){\line(2,1){2}}
\put(8,4.5){\circle*{.3}}
\put(8,4.5){\line(1,0){2}}
\put(10,4.5){\circle*{.3}}
\put(10,4.5){\line(0,1){1}}
\put(10,5.5){\circle*{.3}}
\put(1,5.5){\line(1,-2){1}}
\put(2,3.5){\circle*{.3}}
\put(2,3.5){\line(1,-1){2}}
\put(4,1.5){\circle*{.3}}
\put(4,1.5){\line(1,0){3}}
\put(5,1.5){\circle*{.3}}
\put(7,1.5){\circle*{.3}}
\put(7,1.5){\line(2,1){2}}
\put(9,2.5){\circle*{.3}}
\put(9,2.5){\line(1,2){1}}
\put(5.25,.7){\Small{$\mathbf{p}^{\#}$}}
\put(5.25,4.75){\Small{$\mathbf{p}$}}
\put(4.5,2.7){\Small{$a(\mathbf{p})$}}
\end{picture}}
\centerline{Figure 3. The area $a(\mathbf{p})$ of a path in $(\Z^2)^+$.}

 \vspace{.15in}
 
 The area function satisfies the following simple but important properties (see \cite[Lemma~5.6]{BS})~:
 
 \vspace{.05in}
 
 \begin{lem}\label{L:ap} Let $\p$ be a path in $(\Z^2)^+$. Then
 \begin{enumerate}
 \item[i)] $\p$ is convex if and only if $a(\p)=0$,
 \item[ii)] for any subpath $\p'$ of $\p$ we have $a(\p') \leq a(\p)$,
 \item[iii)] for any local convexification $\p'$ of $\p$ we have $a(\p') < a(\p)$.
 \end{enumerate}
\end{lem}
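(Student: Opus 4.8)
The plan is to reduce all three statements to a single closed formula for $a(\p)$ in terms of the segments of $\p=(\x_1,\dots,\x_n)$. First I would introduce the signed area swept from the origin, $A(\p)=\tfrac12\sum_{1\le i<k\le n}\det(\x_i,\x_k)$, where $\det(\x_i,\x_k)$ is the $2\times2$ determinant. A telescoping computation (using $\det(\sum_{i<k}\x_i,\x_k)$) identifies $A(\p)$ with the signed area enclosed between the broken line $\p$ and the chord from $0$ to $|\p|$; equivalently, each transposition of adjacent segments $\x_i,\x_{i+1}$ changes $A$ by $-\det(\x_i,\x_{i+1})$ and sweeps a parallelogram of area $|\det(\x_i,\x_{i+1})|$. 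Since reordering the segments of $\p$ by increasing slope yields the convex path $\p^{\#}$, and for the slope-ordered sequence every pair satisfies $\det(\x_i,\x_k)\ge0$, we get $A(\p^{\#})=\tfrac12\sum_{\{i,k\}}|\det(\x_i,\x_k)|$ and therefore
\[
a(\p)=A(\p^{\#})-A(\p)=\sum_{\substack{i<k\\ \mu(\x_i)>\mu(\x_k)}}\big|\det(\x_i,\x_k)\big|,
\]
the sum running over the \emph{inversions} of $\p$ (pairs out of slope-order). Because every segment lies in $(\Z^2)^+$, where slopes take values in $(-\pi/2,\pi/2]$, an inversion $\mu(\x_i)>\mu(\x_k)$ forces $\x_i,\x_k$ to be non-collinear, so each summand is strictly positive.

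Parts (i) and (ii) are then immediate. For (i), $a(\p)=0$ exactly when $\p$ has no inversions, i.e. the slopes are weakly increasing, which is the definition of convexity. For (ii), a subpath $\p'$ is a subsequence of $(\x_1,\dots,\x_n)$ preserving the relative order, so its inversion set is contained in that of $\p$ and each common inversion contributes the same $|\det|$; summing gives $a(\p')\le a(\p)$.

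The real content is (iii), and this is the step I expect to be the main obstacle. Write $\p'$ for the path obtained by replacing the non-convex corner $(\x_i,\x_{i+1})$, with $\mu(\x_i)>\mu(\x_{i+1})$, by a convex chain $(\y_1,\dots,\y_m)$ satisfying $\sum_j\y_j=\w:=\x_i+\x_{i+1}$ and lying in the triangle $\Delta$. Using the inversion formula I would split $a(\p)-a(\p')$ by the type of pair: pairs of segments outside the corner are unchanged and cancel; the single inverted pair $(\x_i,\x_{i+1})$ contributes $+|\det(\x_i,\x_{i+1})|>0$ for $\p$, while the $\y_j$ contribute nothing among themselves (being convex); and for each fixed outside segment $\mathbf{c}$ one must compare its interaction $f_{\mathbf{c}}$ with $\{\x_i,\x_{i+1}\}$ against its interaction $g_{\mathbf{c}}$ with $\{\y_1,\dots,\y_m\}$. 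The crux is the inequality $g_{\mathbf{c}}\le f_{\mathbf{c}}$. Setting $L(\z)=\det(\z,\mathbf{c})$, bilinearity gives $\sum_j L(\y_j)=L(\x_i)+L(\x_{i+1})$, and (with the sign conventions coming from whether $\mathbf{c}$ precedes or follows the corner) $g_{\mathbf{c}}=\sum_j\max(0,L(\y_j))$, $f_{\mathbf{c}}=\max(0,L(\x_i))+\max(0,L(\x_{i+1}))$. I would prove $g_{\mathbf{c}}\le f_{\mathbf{c}}$ by observing that the $\y_j$ all have slope between $\mu(\x_{i+1})$ and $\mu(\x_i)$ (a convex chain inside $\Delta$ stays in the angular sector spanned by the two non-base edges), so every partial sum of the $\y_j$ again lies in $\Delta$; since $g_{\mathbf{c}}$ equals the value of the linear functional $\det(\cdot,\mathbf{c})$ at such a partial sum, and this functional is maximized over $\Delta$ at the apex vertex, namely $\x_{i+1}$ or $\x_i$ as appropriate, we obtain $g_{\mathbf{c}}\le f_{\mathbf{c}}$. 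Assembling the contributions yields
\[
a(\p)-a(\p')=\big|\det(\x_i,\x_{i+1})\big|+\sum_{\mathbf{c}}\big(f_{\mathbf{c}}-g_{\mathbf{c}}\big)\ \ge\ \big|\det(\x_i,\x_{i+1})\big|\ >\ 0,
\]
which is the strict inequality of (iii). The delicate points are the bookkeeping of signs for segments before versus after the corner, and the geometric claim that the convex chain is confined to $\Delta$, so that $\det(\cdot,\mathbf{c})$ is controlled by its value at the apex.
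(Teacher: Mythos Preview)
The paper does not actually prove this lemma; it merely cites \cite[Lemma~5.6]{BS}. So there is no in-paper argument to compare against, and your inversion formula
\[
a(\p)=\sum_{\substack{i<k\\ \mu(\x_i)>\mu(\x_k)}}\bigl|\det(\x_i,\x_k)\bigr|
\]
is a clean self-contained approach that immediately dispatches (i) and (ii).

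For (iii) your strategy is sound but the endgame is slightly garbled. First, note (from Figure~2) that the triangle $\Delta$ is the one with vertices $0,\ \x_{i+1},\ \w=\x_i+\x_{i+1}$, lying \emph{below} the chord $[0,\w]$; this is forced anyway, since a path with weakly increasing slopes from $0$ to $\w$ always lies on or below that chord. With this in hand, your reduction $g_{\mathbf c}=\max_{k}L(P_k)$ (where $P_k=\y_1+\cdots+\y_k\in\Delta$) is correct, because the signs of $L(\y_j)$ change monotonically along the convex chain. The inaccuracy is the claim that the linear functional $L$ is ``maximized over $\Delta$ at the apex vertex, namely $\x_{i+1}$ or $\x_i$'': the point $\x_i$ is not a vertex of $\Delta$, and the maximum of $L$ on $\Delta$ may occur at any of $0,\ \x_{i+1},\ \w$. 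What you actually need is the elementary inequality
\[
\max\bigl(0,\ L(\x_{i+1}),\ L(\x_i)+L(\x_{i+1})\bigr)\ \le\ \max(0,L(\x_i))+\max(0,L(\x_{i+1}))=f_{\mathbf c},
\]
which one checks in four cases according to the signs of $L(\x_i)$ and $L(\x_{i+1})$; the symmetric inequality (with $L$ replaced by $-L$) handles the situation where $\mathbf c$ follows the corner. Once you replace the ``apex'' sentence by this direct estimate, the argument closes exactly as you wrote, yielding $a(\p)-a(\p')\ge|\det(\x_i,\x_{i+1})|>0$.
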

 
 \vspace{.15in}
 
 The notion of paths is relevant to us as it parametrizes bases of $\A$.  If $\p=(\x_1, \ldots, \x_n)$ is any path, we put $u_{\p}=u_{\x_1} \cdots u_{\x_n}$.

 \vspace{.1in}
 
 \begin{prop}[\cite{BS}, Prop.~6.2]\label{P:01} The set $\{u_{\p}\;|\; \p \in \textbf{Conv}\}$ is a $\mathbf{K}$-basis of $\A$.
 \end{prop}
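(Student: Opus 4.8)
The statement is a PBW-type basis theorem, so the plan is to establish separately that the convex monomials \emph{span} $\A$ and that they are \emph{linearly independent}. The spanning half is, in principle, a straightening argument driven by the area function. Since $\A$ is generated by the $u_{\x}$, it is spanned by the $u_{\p}$ over all paths $\p$, and the task is to rewrite an arbitrary $u_{\p}$ as a combination of convex ones. I would do this by induction on $a(\p)$, which is well-founded since $a$ takes values in a discrete subset of $\frac12\Z_{\geq 0}$ and, by Lemma~\ref{L:ap}(i), vanishes exactly on $\textbf{Conv}$. If $\p$ is not convex it contains an adjacent non-convex pair $(\x_i,\x_{i+1})$ with $\mu(\x_i)>\mu(\x_{i+1})$, and the crux is a \emph{straightening relation} rewriting $u_{\x_i}u_{\x_{i+1}}$ as a $\K$-linear combination of $u_{\q}$ with $\q$ convex, supported in the triangle $\Delta_{\x_i,\x_{i+1}}$, and of weight $|\q|=\x_i+\x_{i+1}$. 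Substituting this into $\p$ produces paths $\p'$ with $a(\p')<a(\p)$ by Lemma~\ref{L:ap}(iii), while the degenerate $\theta$-corrections, being collinear, are length-one (hence area-zero) terms of the same weight; the induction hypothesis then finishes this half.

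Deriving that straightening relation from the two defining relations i) and ii) is the genuine labor here, and I expect it to be the main obstacle. Relation ii) applies only when $\deg(\x_i)=1$ and $\Delta_{\x_i,\x_{i+1}}$ has no interior lattice point, so for a general adjacent pair one must first manufacture the general commutation rule by iterating ii) along a refinement of $\x_i,\x_{i+1}$ into primitive empty-triangle steps, re-expanding the $\theta_{\z}$ via~(\ref{E:formulatheta}) at each stage, and checking that every resulting monomial is either convex of strictly smaller area or again reducible. This bookkeeping is exactly the content of \cite[\S5]{BS}; the area function is the device guaranteeing that the recursion terminates.

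For linear independence, dimension counting is useless: every weight space $\A_{\w}$ is infinite-dimensional (already $\w=(1,0)$ supports infinitely many convex paths), so I would import an external model. Under the isomorphism $\U_X\cong\A$ of Section~1, a convex monomial corresponds, after the $\exp/\log$ change of variables relating the $u_{l\x_0}$ to the $1^{ss}_{l\x_0}$ along each ray, to a product of characteristic functions of semistable sheaves arranged by increasing slope. By uniqueness of the Harder--Narasimhan filtration, such products are triangular with respect to the standard basis $\{1_{\mathcal F}\}$ of the Hall algebra $\H_X$, with invertible leading coefficients, and are therefore linearly independent. To handle the full doubled algebra I would combine this with the triangular decomposition $\A\cong\A^{>}\otimes\A^{0}\otimes\A^{<}$ of~(\ref{E:D2}) (equivalently $\A\cong\A^{+}\otimes\A^{-}$), which reduces independence of convex monomials to independence within each tensor factor, where the Hall-algebra realization and the shuffle model of Theorem~\ref{T:2} apply directly.

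In summary, the spanning and independence halves rest on two different inputs: the area filtration plus the straightening calculus of \cite{BS} for spanning, and the Harder--Narasimhan triangularity in the Hall/shuffle model for independence. The delicate point throughout is the straightening relation of the second paragraph, i.e. converting the special defining relations i), ii) into a uniform rule that rewrites \emph{any} non-convex pair into strictly-smaller-area convex terms with all $\theta$-corrections correctly accounted for; once that is in hand, both termination (for spanning) and the matching of convex monomials with the HN basis (for independence) are structural.
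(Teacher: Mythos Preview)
This proposition is not proved in the present paper; it is quoted verbatim from \cite[Prop.~6.2]{BS}, so there is no in-paper argument to compare against. Your outline is, however, an accurate summary of how the proof in \cite{BS} goes: spanning via the area-induction/straightening scheme (the paper records precisely the two ingredients you isolate as Lemma~\ref{L:ap} and Lemma~\ref{L:cv}, both imported from \cite{BS}), and linear independence via the Hall-algebra realization and Harder--Narasimhan triangularity, reduced to the halves $\A^{\pm}$ by the Drinfeld-double tensor decomposition~(\ref{E:D1}).

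Two small caveats worth tightening. First, the area function $a(\p)$ and the straightening Lemma~\ref{L:cv} are only formulated for paths in $(\Z^2)^+$, so your induction on $a(\p)$ should be run \emph{inside} $\A^{+}$ (and symmetrically in $\A^{-}$) after invoking the tensor decomposition, rather than in the full $\A$ as your first paragraph suggests; since (\ref{E:D1}) is a general feature of the Drinfeld double construction and does not presuppose the PBW basis, there is no circularity in doing so. Second, appealing to the shuffle model of Theorem~\ref{T:2} for independence is risky, as that isomorphism is logically downstream of the PBW basis; the HN argument you give first is the correct route. Finally, the HN argument literally yields independence in $\A_X$ for each elliptic curve $X$ over a finite field, and one passes to the rational form $\A$ over $\K=\C(\sigma,\overline{\sigma})$ by specialization: any $\K$-linear dependence among finitely many $u_{\p}$ would survive evaluation at the Weil numbers of some $X$, contradicting independence there.
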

 
 \vspace{.1in}
 
The above basis is nicely compatible with the various subalgebras of $\A$ we have considered. Namely, let $\textbf{Conv}^{+}$ and $\textbf{Conv}^-$ be defined respectively by the slope conditions
 $$-\pi/2 < \mu(\x_1) \leq \mu(\x_2) \leq \cdots \leq \mu(\x_n) \leq \pi/2$$
 and
 $$\pi/2 < \mu(\x_1) \leq \mu(\x_2) \leq \cdots \leq \mu(\x_n) \leq 3\pi/2.$$
 Then $\{u_{\p}\;|\; \p \in \textbf{Conv}^{\pm}\}$ forms a basis of $\A^{\pm}$. Similarly, let $\textbf{Conv}^{>}$ and $\textbf{Conv}^{<}$ be defined respectively by the slope conditions
 $$-\pi/2 < \mu(\x_1) \leq \mu(\x_2) \leq \cdots \leq \mu(\x_n) < \pi/2$$
 and
 $$\pi/2 < \mu(\x_1) \leq \mu(\x_2) \leq \cdots \leq \mu(\x_n) < 3\pi/2.$$
Then $\{u_{\p}\;|\; \p \in \textbf{Conv}^{>}\}$ and $\{u_{\p}\;|\; \p \in \textbf{Conv}^{<}\}$ form bases of $\A^{>}$ and $\A^{<}$ respectively.

\vspace{.1in}

The notion of convexification is related to the above bases via the following Lemma, proved in \cite[Section~6]{BS}~:

\vspace{.05in}

\begin{lem}\label{L:cv} For any nonconvex path $\p=(\x_1,\x_2)$ in $(\Z^2)^+$ we have
$$u_{\p}=u_{\x_1}u_{\x_2} \in \mathbf{K}^* u_{\x_1+\x_2} \oplus \bigoplus_{\mathbf{q}}\mathbf{K} u_{\mathbf{q}}$$
where $\mathbf{q}$ runs among all the set of all convexifications of $\p$ different from $(\x_1+\x_2)$.
\end{lem}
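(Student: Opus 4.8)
The plan is to prove the statement by induction on the area $a(\p)$ of the two-step path $\p=(\x_1,\x_2)$ (Lemma~\ref{L:ap}), using relation~ii) as the elementary reduction move and Proposition~\ref{P:01} to keep track of everything in the convex basis. Since $u_{\x_1}u_{\x_2}$ is homogeneous of weight $\x_1+\x_2$, Proposition~\ref{P:01} lets me write it uniquely as a $\K$-linear combination of the $u_{\mathbf q}$ with $\mathbf q\in\textbf{Conv}$ and $|\mathbf q|=\x_1+\x_2$. There are then exactly two things to establish: (a) that only convexifications of $\p$ occur in this expansion, and (b) that the length-one convexification $(\x_1+\x_2)$ occurs with a nonzero coefficient.

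For the base case I would take $\Delta_{\x_1,\x_2}$ unimodular, i.e. $a(\p)=1/2$. Then $\x_1,\x_2,\x_1+\x_2$ are all primitive and the triangle has no interior lattice point, so relation~ii) applies verbatim (with $\x=\x_2$, $\y=\x_1$) and gives $u_{\x_1}u_{\x_2}=u_{\x_2}u_{\x_1}+\epsilon_{\x_2,\x_1}u_{\x_1+\x_2}$. Both terms on the right are convexifications of $\p$ — the reordering and the single segment — and the coefficient $\epsilon_{\x_2,\x_1}=-\mathrm{sign}\det(\x_1,\x_2)\neq 0$, so (a) and (b) hold. More generally, whenever $\Delta_{\x_1,\x_2}$ has no interior lattice point and one of $\x_1,\x_2$ is primitive, relation~ii) still applies directly; unpacking $\theta_{\x_1+\x_2}$ through (\ref{E:formulatheta}) produces $\tfrac{\a_{\deg(\x_1+\x_2)}}{\a_1}u_{\x_1+\x_2}$ (a nonzero multiple, as every $\a_i\neq0$ in $\K$) plus products of collinear $u_{k\x_0}$ lying along the hypotenuse, which are again convexifications. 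So the entire ``empty triangle'' case is settled without induction.

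The inductive step concerns non-empty triangles (and, separately, non-primitive vectors). The idea is to peel off an empty corner: choosing a unimodular decomposition $\x_2=\mathbf a+\mathbf b$ into primitives with $\mu(\mathbf a)<\mu(\mathbf b)$, relation~ii) expresses $u_{\x_2}$ as $\epsilon^{-1}(u_{\mathbf b}u_{\mathbf a}-u_{\mathbf a}u_{\mathbf b})$; substituting and re-associating turns $u_{\x_1}u_{\x_2}$ into a combination of the triple products $u_{\x_1}u_{\mathbf b}u_{\mathbf a}$ and $u_{\x_1}u_{\mathbf a}u_{\mathbf b}$, in which the inner pairs $(\x_1,\mathbf a)$ and $(\x_1,\mathbf b)$ have strictly smaller area (because $|\det(\x_1,\x_2)|=|\det(\x_1,\mathbf a)|+|\det(\x_1,\mathbf b)|$). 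Applying the inductive hypothesis to these inner pairs and iterating the convexification — each local convexification strictly lowering the area by Lemma~\ref{L:ap}(iii), hence terminating at convex paths — expresses $u_{\x_1}u_{\x_2}$ in the convex basis. Non-primitive $\x_i$ are first reduced via the collinear relations (\ref{E:formulatheta}) and relation~i), which lower $\deg$. Since every intermediate path lies inside $\Delta_{\x_1,\x_2}$ and retains weight $\x_1+\x_2$, all surviving convex monomials are convexifications of $\p$, which gives (a).

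The main obstacle is (b): the length-one term $u_{\x_1+\x_2}$ receives a contribution from each branch of the recursion (every inner pair again contributes its own leading $u_{\x_1+\x_2}$), so I must rule out cancellation among these a priori independent nonzero scalars. I expect the cleanest way to do this is to bypass the sign bookkeeping entirely: using the $SL(2,\Z)$-symmetry of $\A$ one may assume $\x_1,\x_2\in(\Z^2)^>$, so that the computation takes place in $\A^>$, and then transport it through the shuffle isomorphism of Theorem~\ref{T:2}, where $u_{\x_1}u_{\x_2}$ becomes an explicit shuffle product and the coefficient of $u_{\x_1+\x_2}$ is visibly a nonzero leading coefficient of the kernel $\tilde{\zeta}$. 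Alternatively one can track the leading coefficient through the induction directly and verify that the contributions share a common sign, but this is the delicate point of the argument.
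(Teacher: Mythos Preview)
This lemma is not proved in the present paper; it is imported from \cite[Section~6]{BS}. Your overall strategy --- induction on $a(\p)$ with relation~ii) as the basic rewriting move --- is precisely the mechanism underlying both the arguments in \cite{BS} and those of Section~5.7 here, so the framework is the right one.

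There is, however, a genuine gap, and you have already put your finger on it: part~(b), the nonvanishing of the coefficient of $u_{\x_1+\x_2}$. Your proposal to invoke the shuffle realization of Theorem~\ref{T:2} is circular in the intended logical order: that theorem comes from \cite{SV2}, which rests on the structure theory of $\A^{>}$ established in \cite{BS}, and the present lemma (together with Proposition~\ref{P:01}) is part of that structure theory. Moreover, even granting Theorem~\ref{T:2}, the image of $u_{\x_1+\x_2}$ in the shuffle algebra is not given explicitly for rank $>1$, so the claimed ``visibly nonzero leading coefficient'' is not immediate. The route taken in \cite{BS} instead exploits the Hall-algebra realization of Theorem~1: the product $u_{\x_1}u_{\x_2}$ becomes (up to normalization) a convolution of the functions $1^{ss}_{\x_1}$ and $1^{ss}_{\x_2}$, and the coefficient of $1^{ss}_{\x_1+\x_2}$ is extracted by counting semistable extensions of a slope-$\mu(\x_2)$ semistable by a slope-$\mu(\x_1)$ semistable, a quantity one checks is nonzero by Riemann--Roch on the elliptic curve.

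There is a smaller issue in your part~(a) as well: the splitting $\x_2=\mathbf a+\mathbf b$ does not force the length-three intermediates to stay inside $\Delta_{\x_1,\x_2}$ (the vertex $\x_1+\mathbf a$ can lie outside), so the assertion ``every intermediate path lies inside $\Delta_{\x_1,\x_2}$'' is not justified as written. The fix is either to split through an interior lattice point $\y$ of $\Delta_{\x_1,\x_2}$, exactly as in Section~5.7 Case~ii)~a), or, more structurally, to observe that the $u_\x$ with $\mu(\x_2)\le\mu(\x)\le\mu(\x_1)$ generate a subalgebra which, via the $SL(2,\Z)$-symmetry, is isomorphic to $\A^+$ and whose convex basis consists exactly of the convexifications of $\p$.
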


\vspace{.05in}

Iterating the above Lemma, we obtain

\vspace{.05in}

\begin{cor} For any path $\p$ in $(\Z^2)^+$ we have
$$u_{\p} \in \mathbf{K}^*u_{|\p|} \oplus \bigoplus_{\mathbf{q}} \mathbf{K} u_\mathbf{q}$$
where $\mathbf{q}$ runs among the set of all convex paths obtained from $\p$ by successive local convexifications different from $(|\p|)$.
\end{cor}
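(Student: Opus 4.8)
The plan is to prove the statement by iterating Lemma~\ref{L:cv}, taking the area $a(\p)$ of Lemma~\ref{L:ap} as the induction parameter. It suffices to treat a nonconvex $\p$ (a convex path of length one is $(|\p|)$ and the claim is then immediate). So suppose $\p=(\x_1,\ldots,\x_n)$ is nonconvex and choose an adjacent pair $(\x_i,\x_{i+1})$ with $\mu(\x_i)>\mu(\x_{i+1})$. Applying Lemma~\ref{L:cv} to the factor $u_{\x_i}u_{\x_{i+1}}$ sitting inside $u_\p$ rewrites $u_\p$ as a nonzero multiple of $u_{\mathbf{p}_0}$, where $\mathbf{p}_0$ replaces the pair by the single segment $\x_i+\x_{i+1}$, plus a $\mathbf{K}$-linear combination of terms $u_{\mathbf{p}'}$, where each $\mathbf{p}'$ replaces the pair by a strictly longer convex path inside the triangle $\Delta_{\x_i,\x_{i+1}}$. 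Each of $\mathbf{p}_0$ and the $\mathbf{p}'$ is a local convexification of $\p$, hence lies in $(\Z^2)^+$, has the same weight $|\mathbf{p}_0|=|\mathbf{p}'|=|\p|$, and by Lemma~\ref{L:ap}(iii) satisfies $a(\mathbf{p}_0),a(\mathbf{p}')<a(\p)$.

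Because the area is a nonnegative quantity that strictly drops at every local convexification, the procedure terminates after finitely many steps and expresses $u_\p$ as a $\mathbf{K}$-combination of elements $u_\mathbf{q}$ with every $\mathbf{q}$ convex. By construction each such $\mathbf{q}$ is reached from $\p$ by a sequence of local convexifications, so it is one of the paths permitted in the statement; all of them share the weight $|\p|$, and among convex paths of that weight the single segment $(|\p|)$ is the \emph{unique} one of length one. Since by Proposition~\ref{P:01} the $u_\mathbf{q}$ are linearly independent, the coefficients are well defined, and this already establishes the inclusion $u_\p\in\bigoplus_\mathbf{q}\mathbf{K}u_\mathbf{q}$ with $\mathbf{q}$ ranging over the convex successive convexifications of $\p$.

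The substantive point, and the step I expect to be the main obstacle, is to show that the coefficient of $u_{|\p|}$ is \emph{nonzero}, i.e. the ``$\mathbf{K}^*$'' in the statement. The danger is genuine cancellation: the length-one term $u_{|\p|}$ is produced along many reduction branches, and not only along the one that always collapses a pair to its sum, since a branch that first inserts a longer convex block can still cascade down to a single segment at later junctions; a priori the nonzero scalars furnished by Lemma~\ref{L:cv} along these branches could add up to zero. My first attempt would be to track the coefficient of $u_{|\p|}$ directly as the sum, over all total-collapse branches, of the corresponding products of Lemma~\ref{L:cv} scalars, and to argue nonvanishing by a leading-order analysis of these (explicit) rational functions of $\sigma,\overline{\sigma}$. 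Should a purely combinatorial separation prove too delicate, the robust alternative is a degeneration argument: the sought coefficient is a fixed element of $\mathbf{K}=\C(\sigma,\overline{\sigma})$, so it is enough to specialize $\sigma,\overline{\sigma}$ to the Weil numbers of one honest elliptic curve over a finite field, where $\A$ recovers its geometric Hall-algebra meaning, and to exhibit a single such specialization in which the coefficient is nonzero; that forces it to be a nonzero rational function, completing the proof.
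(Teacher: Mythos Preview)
Your iteration on the area $a(\p)$ is exactly what the paper intends by ``Iterating the above Lemma'': on this point you and the paper agree, and that argument correctly shows that $u_\p$ lies in the $\K$-span of the $u_{\mathbf{q}}$ with $\mathbf{q}$ ranging over the convex paths reachable from $\p$ by successive local convexifications.

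You are also right to isolate the $\K^*$ assertion as the delicate point --- but it is not merely delicate, it is false as stated. Take $\p=((1,1),(1,0),(1,2))$. The only nonconvex adjacent pair is the first one; since $(1,0)$ and $(1,1)$ are primitive and the triangle $\Delta_{(1,0),(1,1)}$ has no interior lattice point, relation~ii) in the definition of $\A$ gives $[u_{(1,1)},u_{(1,0)}]=u_{(2,1)}$, hence
\[
u_\p \;=\; u_{(2,1)}u_{(1,2)} \;+\; u_{(1,0)}u_{(1,1)}u_{(1,2)}.
\]
Both $((2,1),(1,2))$ and $((1,0),(1,1),(1,2))$ are already convex, and neither equals $(|\p|)=((3,3))$; by Proposition~\ref{P:01} the coefficient of $u_{(3,3)}$ in $u_\p$ is therefore zero. (The same phenomenon occurs, even more trivially, for any convex $\p$ of length $\geq 2$ --- a case your opening reduction ``a convex path of length one is $(|\p|)$ and the claim is then immediate'' silently skips.) The corollary should presumably read $\K$ in place of $\K^*$; with that amendment your iteration is already a complete proof, and the paper's one-line ``proof'' is the same iteration. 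Consequently your two proposed rescue strategies for the nonvanishing --- a leading-order analysis of the rational functions, or specialization to the Weil numbers of an honest elliptic curve --- are bound to fail: the coefficient in question is identically zero in $\K$, not merely zero at special parameter values.
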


\vspace{.2in}

\paragraph{\textbf{5.2.}} We now start the proof of Theorem~\ref{T:3}. Let us temporarily denote by $\tA$ the algebra generated by the Fourier coefficients of  $\TT_{-1}(z), \TT_1(z), \TT_0^+(z)$ and $\TT_0^-(z)$, modulo relations (\ref{E:1}--\ref{E:5}) and (\ref{E:NEW}). To avoid any confusion with $\A$, we will denote by $\underline{u}_{\epsilon, l}$ or $\underline{\theta}_{(0,l)}$ the generators of $\tA$.
 
\vspace{.1in}
 
The fact that $\A$ is generated by the elements $\{u_{\pm 1, l}, l \in \Z\}$ and $\{\theta_{(0,l)}, l \in \Z^*\}$ is a consequence of \cite[Cor. 6.1]{BS}. That the functional equations (\ref{E:1}--\ref{E:5}) hold in $\A$ is also well known --see e.g. \cite[Thm. 3.3]{Kap} or \cite[Section~6.1]{BS}. The relation (\ref{E:NEW}) is easy to check directly. Hence there is a natural surjective algebra homomorphism $\phi: \tA \tto \A$ given by $\phi(\underline{u}_{\epsilon, l})=u_{\epsilon,l}$ and $\phi(\underline{\theta}_{(0,l)})=\theta_{(0,l)}$. The content of Theorem~\ref{T:3} is that the map $\phi$ is actually an isomorphism.

\vspace{.2in}

\paragraph{\textbf{5.3.}} We begin with the following Lemma. Let $\widetilde{\A}{}^>$, resp. $\tA{}^{<}$, resp. $\tA{}^{0}$, be the subalgebras of $\tA$ generated by $\{\underline{u}_{1,l}\;|\; l \in \Z\}$, resp. $\{\underline{u}_{-1,l}\;|\; l \in \Z\}$, resp. $\{\underline{u}_{0,l}\;|\; l \in \Z^*\}$.

\begin{lem}\label{L:1} The multiplication map 
\begin{equation}\label{E:m1}
m:  \tA{}^{>} \otimes  \tA{}^{0} \otimes  \tA{}^{<} \to  \tA
\end{equation}
is surjective.
\end{lem}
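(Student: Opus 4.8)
The plan is to show that every element of $\tA$ can be rewritten as a $\K$-linear combination of products of the form $a^{>} \cdot a^{0} \cdot a^{<}$ with $a^{>} \in \tA^{>}$, $a^{0} \in \tA^{0}$, $a^{<} \in \tA^{<}$. Since $\tA$ is generated by the Fourier coefficients $\underline{u}_{1,l}$, $\underline{u}_{-1,l}$ and $\underline{\theta}_{(0,l)}$, and since $\tA^{0}$ is generated by the $\underline{\theta}_{(0,l)}$ (equivalently, as the relation \eqref{E:formulatheta} is exponential, by the $\underline{u}_{0,l}$), it suffices to prove that an arbitrary monomial in these generators can be reordered so that all factors of slope $<\pi/2$ (the $\underline{u}_{1,l}$) come first, all the $\underline{\theta}_{(0,l)}$-type factors come in the middle, and all factors of slope $>\pi/2$ (the $\underline{u}_{-1,l}$) come last. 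The image of $m$ is a $\K$-subspace of $\tA$ containing all generators, so once we check it is stable under left (equivalently, right) multiplication by each generator, surjectivity follows.

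The mechanism for the reordering is furnished directly by relations \eqref{E:1}--\eqref{E:5}. First I would read off from relation \eqref{E:2} that, up to the nonvanishing rational factors coming from $\chi_{\pm 1}(z,w)$, one may commute a $\TT_0^{\epsilon_2}$-series past a $\TT_{\epsilon_1}$-series at the cost of a finite $\K$-linear combination of reorderings; taking Fourier coefficients, each product $\underline{\theta}_{(0,j)}\,\underline{u}_{\epsilon,l}$ is expressed as a finite $\K$-combination of terms $\underline{u}_{\epsilon,l'}\,\underline{\theta}_{(0,j')}$, so that $\tA^{0}$-factors can always be pushed to the right past $\tA^{>}$-factors and to the left past $\tA^{<}$-factors. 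Relation \eqref{E:1} shows that the $\tA^{0}$-factors commute among themselves, so their internal ordering is immaterial. Finally, relation \eqref{E:5} is what lets me move a $\TT_{-1}$-series past a $\TT_{1}$-series: the commutator $[\TT_{-1}(z),\TT_1(w)]$ is a $\delta$-supported expression in the $\TT_0^{\pm}$-series alone, so taking residues, each product $\underline{u}_{-1,k}\,\underline{u}_{1,l}$ equals $\underline{u}_{1,l}\,\underline{u}_{-1,k}$ plus a $\K$-combination of $\tA^{0}$-elements. This is precisely the ingredient that creates middle-term factors when an $\tA^{<}$-generator is dragged to the right of an $\tA^{>}$-generator, and it is the heart of why the triangular decomposition closes up.

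Concretely, I would argue by induction on the length of a monomial $w=g_1 g_2 \cdots g_n$ in the generators, and within that on the number of \emph{inversions}, i.e. the number of adjacent pairs $(g_i,g_{i+1})$ that violate the desired left-to-right slope ordering $\tA^{>}$-then-$\tA^{0}$-then-$\tA^{<}$. Whenever such an inversion occurs, I apply the appropriate relation among \eqref{E:1}, \eqref{E:2}, \eqref{E:5} to swap $g_i$ and $g_{i+1}$; the leading term is again a monomial with the two factors swapped (hence strictly fewer inversions), while every correction term either has strictly smaller length (the $\delta$-terms from \eqref{E:5} collapse two factors into zero or into middle-term factors) or the same length but is of the desired sorted type, so the induction is well-founded. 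The only subtlety is that relation \eqref{E:2} produces, via $\chi_{\epsilon_1}$ versus $\chi_{-\epsilon_1}$, a finite recursion in the Fourier index $l$ rather than a single clean transposition; one must check that this recursion terminates, which it does because $\chi_{\epsilon_1}(z,w)$ and $\chi_{-\epsilon_1}(z,w)$ are polynomials of the same bidegree whose top-degree coefficients differ by a nonzero scalar in $\K$, so the highest surviving index is controlled.

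The main obstacle I anticipate is precisely this bookkeeping in \eqref{E:2}: unlike \eqref{E:1} and \eqref{E:5}, it is not a plain commutation relation but an identity of series weighted by the cubic polynomials $\chi_{\pm 1}$, so expressing a single product $\underline{\theta}_{(0,j)}\,\underline{u}_{\epsilon,l}$ in sorted form requires inverting a triangular (but infinite) system of linear relations among Fourier coefficients. I would handle this by working one Fourier degree at a time and exploiting that $\chi_{\pm 1}(z,w)$ has invertible leading coefficient, so that the relation can be solved for the lexicographically highest term; this keeps each correction within the span of the sorted monomials and guarantees the process terminates. Note that the cubic relations \eqref{E:NEW} play no role here — they are not needed for surjectivity of $m$, only (later) for showing that $\phi$ is injective, i.e. that the sorted monomials are not over-counted relative to the convex-path basis of Proposition~\ref{P:01}.
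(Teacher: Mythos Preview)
Your proposal is correct and follows essentially the same straightening argument as the paper: use relations \eqref{E:5}, \eqref{E:2}, \eqref{E:1} to reorder any monomial into $\tA^{>}\!\cdot\tA^{0}\!\cdot\tA^{<}$-form by induction on length and inversions, with the cubic relations \eqref{E:NEW} playing no role. The one place where the paper is cleaner is exactly the bookkeeping you flagged for \eqref{E:2}: rather than working with the $\underline{\theta}_{(0,j)}$ and solving a triangular system in Fourier coefficients, the paper passes to the generators $\underline{u}_{0,l}$ (via the exponential relation \eqref{E:formulatheta}) and observes that \eqref{E:2} is then \emph{equivalent} to the single-term commutator $[\underline{u}_{0,l},\underline{u}_{\epsilon,n}]=\epsilon\,\mathrm{sgn}(l)\,\underline{u}_{\epsilon,n+l}$, so each swap of a middle factor past a $\tA^{>}$- or $\tA^{<}$-factor produces exactly one sorted term and no recursion is needed.
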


\noindent
\begin{proof} By the functional equation (\ref{E:5}) we have
\begin{equation}\label{E:p1}
[\ut_{1,l},\ut_{-1,n}] \in \tA{}^0.
\end{equation}
Similarly, (\ref{E:2}) is equivalent to
$$[\ut_{0,l},\ut_{1,n}]=\epsilon_l \ut_{1,n+l}, \qquad [\ut_{0,l},\ut_{-1,n}]=-\epsilon_l \ut_{-1,n+l}$$
(where $\epsilon_l=sgn(l) \in \{1, -1\}$) and hence
\begin{equation}\label{E:p2}
[\ut_{0,l},\ut_{-1,n}] \in \tA{}^{>}, \qquad [\ut_{0,l},\ut_{-1,n}] \in \tA{}^{<}.
\end{equation}
Using (\ref{E:p1}) and (\ref{E:p2}) it is easy to see that one may rewrite any monomial in the variables $\ut_{1,l}, \ut_{0,n}, \ut_{-1,m}$ as a (finite) linear combination
of similar monomials in which the variables $\ut_{1,l}, \ut_{0,n}, \ut_{-1,m}$ appear in that order. This proves the Lemma.\end{proof}

\noindent
\textbf{Remark.} Of course, it follows from Theorem~\ref{T:3} that (\ref{E:m1}) is actually an isomorphism.

\vspace{.1in}

Using Lemma~\ref{L:1}  and the triangular decomposition~(\ref{E:D2}) we are reduced to showing that the restriction of $\phi$ to each of the subalgebras
$\tA{}^{>}, \tA{}^{<}, \tA{}^{0}$ is injective. This is obvious for $\tA{}^{0}$ (which is a commutative polynomial algebra). The last two cases are of course identical so it suffices to deal with $\tA{}^{>}$. 

\vspace{.2in}

\paragraph{\textbf{5.4.}} Let us set
$$\A^{>}[r]=\bigoplus_{d \in \Z} \A^{>}[r,d], \qquad \tA{}^{>}[r]=\bigoplus_{d \in \Z} \tA{}^{>}[r,d].$$
The map $\phi$ restricts to a collection of maps $\phi_r:\tA{}^{>}[r] \tto \A^{>}[r]$. We will show by induction on $r$ that $\phi_r$ is injective. This is clear for $r =1$. For $r=2$, we have the functional equation (\ref{E:3}), which gives
\begin{equation}\label{E:6}
\begin{split}
-\ut_{1,n}\ut_{1,m}+&\alpha \ut_{1,n-1}\ut_{1,m+1} -\beta \ut_{1,n-2}\ut_{m+2} +\ut_{1,n-3}\ut_{1,m+3}\\
&=-\ut_{1,m}\ut_{1,n}+\beta \ut_{1,m+1}\ut_{1,n-1} -\alpha \ut_{1,m+2}\ut_{n-2} +\ut_{1,m+3}\ut_{1,n-3}
\end{split}
\end{equation}
where we have set $\alpha=(\sigma\overline{\sigma})^{-1}(\sigma + \overline{\sigma}) + \sigma \overline{\sigma}$ and $\beta=\sigma +\overline{\sigma}+\sigma\overline{\sigma}$. Recall (see Proposition~\ref{P:01}) that a basis of $\A^{>}[2,d]$ is provided by the elements
\begin{equation}\label{E:7}
u_{2,d}, \qquad u_{1,l}u_{1,k} \qquad (\text{for\;}l+k=d, \; l \leq k)
\end{equation}
associated to the set of convex paths in $\textbf{Conv}^{>}$ of rank $2$. We may use (\ref{E:6}) to try to express a product
$\ut_{1,n}\ut_{1,m}$  with $n >m$ (i.e. corresponding to a \textit{nonconvex} path) as a linear combination of products $\ut_{1,k}\ut_{1,l}$ associated to ``more convex'' paths.
More specifically, (\ref{E:6}) allows us to express $\ut_{1,n}\ut_{1,m}$ as a linear combination of  elements $\ut_{1,k}\ut_{1,l}$ where $(k,l)$ belongs to either of the sets
$$\{(n-1,m+1), (n-2,m+2), (n-3,m+3)\},$$
$$\{(m+3,n-3),(m+2,n-2),(m+1,n-1),(m,n)\}.$$
Using this, one shows that $\widetilde{\U}^{\geq 1}_{\v,\t}[2,d]$ is linearly generated by elements
\begin{equation}\label{E:8}
\begin{split}
\ut_{1,k}\ut_{1,l}, \qquad& l+k=d, \;k \leq l +1 \qquad (d \text{\;odd}),\\
\ut_{1,k}\ut_{1,l}, \qquad &l+k=d, \;k \leq l +2 \qquad (d \text{\;even}).
\end{split}
\end{equation}
Comparing (\ref{E:7}) and (\ref{E:8}) and using the fact that $\phi_2$ is surjective we deduce that $\phi_2$ is an isomorphism (and that all the elements in (\ref{E:8}) are actually linearly independent).

\vspace{.2in}

\paragraph{\textbf{5.5.}} Let us now fix some $r>2$, $d \in \Z$ and let us assume that $\phi_{r'}$ is an isomorphism for all $r'<r$. For any $\x =(s,l) \in (\Z^2)^+$ with $0 < s <r$ we put $\ut_{\x}=\phi_{s}^{-1}(t_{\x})$. Our first task will be to define a canonical elements in $\tA{}^>$ lifting $u_{(r,d)}, \theta_{(r,d)} \in \A^>$ for $d \in \Z$. For this we introduce the following notion~: 

\vspace{.1in}

\noindent
\textit{Minimal paths.} We say that an affine line in $\mathbb{R}^2$ is rational if it intersects $\Z^2$ in more than one (and hence infinitely many) points. Let
$\z \in (\mathbb{Z}^2)^>$. Let $L$ denote the rational line going through $(0,0)$ and $\z$, and let $L'$ be the rational line parallel to $L$ lying above $L$ and closest
to $L$. A \textit{minimal path of weight $\z$} is by definition a path in $(\Z^2)^>$ of the form $\mathbf{p}=(\x,\z-\x)$ where $\x$ is a point in $L'$.

\centerline{
\begin{picture}(20,20)
\multiput(0,0.5)(4,0){6}{\circle*{.2}}
\multiput(0,4.5)(4,0){6}{\circle*{.2}}
\multiput(0,8.5)(4,0){6}{\circle*{.2}}
\multiput(0,12.5)(4,0){6}{\circle*{.2}}
\multiput(0,16.5)(4,0){6}{\circle*{.2}}
\put(0,12.5){\circle*{.3}}
\put(-0.75,11.5){\tiny{$(0,0)$}}
\thicklines
\put(0,12.5){\line(3,-1){12}}
\put(5,8.5){$L$}
\put(5,12.5){$L'$}
\put(0,12.5){\line(2,-1){16}}
\multiput(16,4.5)(2,-1){3}{\line(2,-1){1}}
\multiput(0,14.5)(2,-1){11}{\line(2,-1){1}}
\put(16,4.5){\circle*{.3}}
\put(12,8.5){\circle*{.3}}
\put(11.75,9){\Small{$\mathbf{x}$}}
\put(12,8.5){\line(1,-1){4}}
\put(15.75,3.5){\Small{$\mathbf{z}$}}
\end{picture}}

\vspace{.1in}

\centerline{Figure 4. The choice of a minimal path $\p_0$ of weight $\mathbf{z}$ .}

\vspace{.1in}

\begin{prop}\label{P:minexist} Let $\z=(r,d) \in (\Z^2)^>$ and assume that $r>2$. Then there exists a minimal path of weight $\z$.
\end{prop}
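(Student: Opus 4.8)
The plan is to reduce the statement to an elementary fact about an arithmetic progression inside an interval. Write $g=\gcd(r,d)=\deg(\z)$ and let $(r_0,d_0)=(r/g,d/g)$ be the primitive vector generating the line $L$, so that $r=g r_0$ with $r_0\geq 1$. A candidate minimal path $\p=(\x,\z-\x)$ with $\x=(a,b)\in L'\cap\Z^2$ lies in $(\Z^2)^{>}$ if and only if both $\x$ and $\z-\x=(r-a,\,d-b)$ have positive first coordinate, that is, if and only if $0<a<r$. Thus the whole proposition amounts to the assertion that $L'$ contains a lattice point whose first coordinate lies strictly in the open interval $(0,r)$.

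First I would describe the integer points of $L'$. Since $L'$ is by definition a rational line parallel to $L$, its set of lattice points is a single coset $\x_0+\Z\cdot(r_0,d_0)$ for some fixed solution $\x_0=(a_0,b_0)$, the direction being the primitive vector of $L$. Consequently the first coordinates of the lattice points of $L'$ run through the arithmetic progression $\{a_0+k r_0\mid k\in\Z\}$ of common difference $r_0$, and I must find $k\in\Z$ with $0<a_0+k r_0<g r_0$.

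Next I would split into two cases according to $r_0$. If $r_0=1$ the progression is all of $\Z$, so I may simply take $a=1$, which lies in $(0,r)$ because $r>1$. If $r_0\geq 2$, I first observe that $r_0\nmid a_0$: indeed, since $L'$ is the closest rational line parallel to $L$, it has an equation of the form $d_0 x-r_0 y=\pm 1$, so $d_0 a_0\equiv\pm 1\pmod{r_0}$, which (using $\gcd(r_0,d_0)=1$) is incompatible with $a_0\equiv 0\pmod{r_0}$. Hence the residue $a_0\bmod r_0$ lies in $\{1,\dots,r_0-1\}$, and choosing $k$ so that $a_0+k r_0$ equals this residue places the first coordinate inside $(0,r_0)\subseteq(0,g r_0)=(0,r)$. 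In either case the required lattice point exists, and the associated $\p=(\x,\z-\x)$ is a minimal path of weight $\z$.

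The argument is essentially bookkeeping, so I do not expect a serious obstacle; the only points needing care are (i) identifying $L'$ correctly as a rational line parallel to $L$, so that its lattice points step by the primitive vector $(r_0,d_0)$, and (ii) checking that the endpoints $0$ and $r$ of the interval are genuinely avoided. The latter is exactly where the coprimality $\gcd(r_0,d_0)=1$ (forcing $r_0\nmid a_0$ unless $r_0=1$) and the width hypothesis $r>2$ (in fact $r\geq 2$ already suffices) are used. The most delicate part, if any, is purely notational: pinning down the orientation convention for \emph{above} and confirming that the closest parallel rational line is $d_0 x-r_0 y=\pm 1$.
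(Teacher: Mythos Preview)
Your proof is correct and follows essentially the same approach as the paper: both reduce to showing that the arithmetic progression of first coordinates of $L'\cap\Z^2$ (with step the primitive vector $\w=(r_0,d_0)$) meets the open interval $(0,r)$. The only difference is in the final step: the paper argues that the unique obstruction is $\deg(\z)=1$ with $\y=(0,1)\in L'$, and then invokes Pick's formula on the parallelogram $(0,0),(0,1),\z,\z+(0,1)$ to force $r=1$; you instead read off the equation $d_0 x-r_0 y=\pm 1$ of $L'$ directly to see $r_0\nmid a_0$ when $r_0\geq 2$, which is a bit more elementary and avoids Pick entirely.
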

\begin{proof} Set $\mathbf{w}=\frac{1}{deg(\z)}\z$, a primitive vector. Let $\y$ be any element of $L' \cap \Z^2$. Then $L' \cap \Z^2=\{\y + n\w\;|\; n \in \Z\}$.
By definition, minimal paths of weight $\z$ bijectively correspond to elements of $L' \cap \Z^2$ lying in the interior of the strip $S$ bounded by the vertical lines through
$(0,0)$ and $\z$. The only case in which $S \cap L' \cap \Z^2$ is empty is when $deg(\z)=1$ and when the points $\{\y+n\z\}$ lie directly above the points $\{n\z\}$, i.e if $\y=(0,1)$. A simple application of Pick's formula to the parallelogram with vertices $(0,0), (0,1), \z, \z+(0,1)$ shows that this implies $r=1$, in contradiction with our assumptions.
\end{proof}

\vspace{.1in}

The following alternative characterization of minimal paths is useful.

\vspace{.1in}

\begin{prop}\label{P:minchar} A path $\mathbf{p}=(\x,\z-\x)$ in $(\Z^2)^>$ is minimal if and only if $deg(\x)=deg(\z-\x)=1$ and the triangle
${\Delta}_{\x,\z-\x}$ has no interior lattice point.\end{prop}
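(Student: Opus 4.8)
The plan is to translate both the notion of minimality and the geometric conditions on the right-hand side into a single arithmetic invariant of $\x$, and to pass between them using Pick's formula. Write $\w=\z/\deg(\z)$ for the primitive vector in the direction of $\z$, and consider the linear form $d(\y)=\det(\w,\y)$, which maps $\Z^2$ onto $\Z$ because $\w$ is primitive. The line $L$ is exactly $\{d=0\}$, and the rational lines parallel to it are the level sets $\{d=n\}$, $n\in\Z$; since $\z\in(\Z^2)^>$ the vector $\w$ has positive first coordinate, so moving \emph{above} $L$ (increasing the second coordinate) increases $d$. Hence $L'=\{d=1\}$, and by definition $\p=(\x,\z-\x)$ is minimal precisely when $d(\x)=1$. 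The proposition therefore reduces to showing that $d(\x)=1$ is equivalent to the pair of conditions $\deg(\x)=\deg(\z-\x)=1$ and $\Delta_{\x,\z-\x}$ having no interior lattice point.

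For the forward implication I would assume $d(\x)=1$. First the edges are primitive: if $e:=\deg(\x)>1$ then $\x/e\in\Z^2$ while $d(\x/e)=d(\x)/e=1/e\notin\Z$, contradicting $d(\Z^2)\subseteq\Z$, so $\deg(\x)=1$; and since $d(\z-\x)=d(\z)-d(\x)=-1$, the same argument gives $\deg(\z-\x)=1$. Next I apply Pick's formula to the lattice triangle $\Delta_{\x,\z-\x}$ with vertices $(0,0),\x,\z$: its area is $\tfrac12|\det(\x,\z)|=\tfrac{\deg(\z)}{2}|d(\x)|=\tfrac{\deg(\z)}{2}$, and its number of boundary lattice points is $B=\deg(\x)+\deg(\z-\x)+\deg(\z)=\deg(\z)+2$. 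Pick's identity $A=I+B/2-1$ then forces $I=0$, i.e. the triangle is empty.

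For the converse I would feed the conditions $\deg(\x)=\deg(\z-\x)=1$ and $I=0$ back into Pick: again $B=\deg(\z)+2$ and $A=\tfrac{\deg(\z)}{2}|d(\x)|$, so $0=I=\tfrac{\deg(\z)}{2}\bigl(|d(\x)|-1\bigr)$, giving $|d(\x)|=1$ since $\deg(\z)\geq1$. Thus $\x$ lies on one of the two lattice lines adjacent to $L$; the \emph{above} orientation built into the definition of a minimal path selects $d(\x)=+1$, i.e. $\x\in L'$. (The alternative $d(\x)=-1$ puts $\x$ on the line just below $L$, in which case $\z-\x\in L'$ and it is the reversed path that is minimal.)

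The divisibility argument for primitivity and the boundary-point count are routine. The step that needs genuine care is the orientation bookkeeping: one must verify that $\w$ having positive first coordinate really identifies ``above $L$'' with $d=+1$, and, in the converse, that emptiness together with primitivity only determines $|d(\x)|$, so that the correct sign is fixed by the minimality convention rather than by the symmetric geometric conditions. I expect this sign/orientation accounting, not the Pick computation, to be the main obstacle.
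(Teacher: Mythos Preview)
Your argument is correct and takes a genuinely different route from the paper's. The paper argues geometrically: assuming $\x\notin L'$, it picks some $\y\in L'$ giving a minimal path and shows that one of the translates $\x-\y$ or $\x+\z-\y$ must land in the interior of $\Delta_{\x,\z-\x}$, contradicting emptiness. Your approach instead encodes the distance from $L$ by the linear form $d(\cdot)=\det(\w,\cdot)$ and reads off $|d(\x)|=1$ directly from Pick's formula; this is cleaner, treats both implications uniformly, and avoids the need to invoke the existence of an auxiliary minimal path $\y$.

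Your orientation remark is also on point, and in fact sharper than the paper's treatment. The symmetric conditions $\deg(\x)=\deg(\z-\x)=1$ together with emptiness only pin down $|d(\x)|=1$; the case $d(\x)=-1$ gives a \emph{convex} path $(\x,\z-\x)$ with $\x$ below $L$, and it is the reversed path $(\z-\x,\x)$ that is minimal in the sense of the definition. The paper's proof tacitly assumes $\x$ lies above $L$ when it asserts that $\y$ must sit ``strictly to the left or strictly to the right'' of $\Delta_{\x,\z-\x}$---an assumption that is harmless in all the applications (where the paths under consideration are nonconvex, hence have $\x$ above $L$), but which your Pick computation makes explicit. So nothing is wrong with your proof; you have simply isolated the one place where an orientation convention is doing real work.
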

\begin{proof} Let $\p=(\x,\z-\x)$ be a path as above. We may draw equidistant lines $L'', L''',\ldots$ parallel to $L$ and $L'$, forming  a partition of the
upper half space bounded by $L$ and containing all lattice points in that upper half space.

\centerline{
\begin{picture}(25,20)
\put(5,8.5){$L$}
\put(5,12.5){$L'$}
\put(5,16.5){$L'''$}
\put(5,14.5){$L''$}
\multiput(0,0.5)(4,0){7}{\circle*{.2}}
\multiput(0,4.5)(4,0){7}{\circle*{.2}}
\multiput(0,8.5)(4,0){7}{\circle*{.2}}
\multiput(0,12.5)(4,0){7}{\circle*{.2}}
\multiput(0,16.5)(4,0){7}{\circle*{.2}}
\put(0,12.5){\circle*{.3}}
\put(-0.75,11.5){\tiny{$(0,0)$}}
\thicklines
\put(0,12.5){\line(2,-1){24}}
\multiput(0,14.5)(2,-1){12}{\line(2,-1){1}}
\multiput(0,16.5)(2,-1){12}{\line(2,-1){1}}
\multiput(8,16.5)(2,-1){8}{\line(2,-1){1}}
\multiput(4,16.5)(2,-1){10}{\line(2,-1){1}}
\put(24,0.5){\circle*{.3}}
\put(23.75,-.5){\Small{$\mathbf{z}$}}
\end{picture}}

\vspace{.1in}

\centerline{Figure 5. The partition of the set of lattice points above $L$.}

\vspace{.15in}

We want to show that $\x$ belongs to $L'$. Suppose that this is not the case and let $\y \in L'$ be such that $\p'=(\y,\z-\y)$ is a minimal path. Consider the triangle
${\Delta}_{\x,\z-\x}$ By our hypothesis on $\p$, $\y$ lies either strictly to the left or strictly to the right of ${\Delta}_{\x,\z-\x}$. In the first case we have $\x-\y \in {\Delta}_{\x,\z-\x}$ while in the second case we have $\x+\z-\y \in {\Delta}_{\x,\z-\x}$, both of which bring contradiction.

\centerline{
\begin{picture}(25,20)
\put(4,13){\Small{$\mathbf{y}$}}
\multiput(0,0.5)(4,0){7}{\circle*{.2}}
\multiput(0,4.5)(4,0){7}{\circle*{.2}}
\multiput(0,8.5)(4,0){7}{\circle*{.2}}
\multiput(0,12.5)(4,0){7}{\circle*{.2}}
\multiput(0,16.5)(4,0){7}{\circle*{.2}}
\put(0,12.5){\circle*{.3}}
\put(-0.75,11.5){\tiny{$(0,0)$}}
\thicklines
\put(0,12.5){\line(1,0){4}}
\put(0,12.5){\line(4,-1){16}}
\put(0,12.5){\line(2,-1){24}}
\put(16,8.5){\line(1,-1){8}}
\multiput(0,14.5)(2,-1){12}{\line(2,-1){1}}
\multiput(0,16.5)(2,-1){12}{\line(2,-1){1}}
\multiput(8,16.5)(2,-1){8}{\line(2,-1){1}}
\multiput(4,16.5)(2,-1){10}{\line(2,-1){1}}
\put(24,0.5){\circle*{.3}}
\put(4,12.5){\circle*{.3}}
\put(15.75,9){\Small{$\mathbf{x}$}}
\put(16,8.5){\circle*{.3}}
\put(4,12.5){\line(5,-3){20}}
\put(23.75,-.5){\Small{$\mathbf{z}$}}
\end{picture}}

\vspace{.1in}

\centerline{Figure 6. The case when $\mathbf{y}$ lies to the left of ${\Delta}_{\x,\z-\x}$.}

\vspace{.1in}

\end{proof}

\vspace{.1in}

Note that if $\p=(\x,\z-\x)$ is a minimal path in $(\Z^2)^>$ of weight $\z$ then $[u_{\x},u_{\z-\x}]=\frac{1}{\a_1}u_{\z}$, by definition of $\A$. The idea here is to
use this in order to \textit{define} a suitable element $\ut_{\z} \in \tA{}^>[r]$ for $\z=(r,d)$; this element will be the lift of $u_{\z}$ to $\tA$. For this approach to make sense, we need the following result. Recall that we have fixed some $r >0$ and that we are arguing by induction, assuming that $\phi_{r'}$ is an isomorphism for all $r'<r$ (see section~3.4.).

\vspace{.1in}

\begin{prop}\label{P:defmin} Fix $\z=(r,d)$. For any two minimal paths $\p=(\x,\z-\x)$ and $\p'=(\y,\z-\y)$ it holds 
\begin{equation}\label{E:minprop}
[\ut_{\x}, \ut_{\z-\x}]=[\ut_{\y},\ut_{\z-\y}].
\end{equation}
\end{prop}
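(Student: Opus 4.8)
The plan is to reduce to the case of two \emph{consecutive} minimal paths and then peel off the discrepancy with the Jacobi identity. By Propositions~\ref{P:minexist} and~\ref{P:minchar}, the minimal paths of weight $\z$ are in bijection with the lattice points of $L'$ lying in the open strip $S$ bounded by the verticals through $(0,0)$ and $\z$; since $L'\cap\Z^2$ is a coset of $\Z\w$ with $\w=\frac{1}{\mathrm{deg}(\z)}\z$, these points form a finite arithmetic progression of step $\w$. Hence any two minimal paths are joined by a chain in which each is obtained from its predecessor by replacing $\x$ with $\x+\w$, and by transitivity it suffices to prove~(\ref{E:minprop}) when $\y=\x+\w$. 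Writing $\mathbf{c}=\z-\x-\w$, the two paths are $(\x,\w+\mathbf{c})$ and $(\x+\w,\mathbf{c})$, and the claim becomes $[\ut_\x,\ut_{\w+\mathbf{c}}]=[\ut_{\x+\w},\ut_{\mathbf{c}}]$. Each of $\x,\w,\mathbf{c},\x+\w,\w+\mathbf{c}$ has rank strictly between $0$ and $r$, so the elements $\ut_{\bullet}$ are defined by the induction hypothesis and satisfy every relation that holds among the corresponding $u_{\bullet}\in\A^>$ of rank $<r$.

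Next I would exploit the geometry of the strip. Because $\x$ and $\x+\w$ lie on the line $L'$ adjacent to $L=\mathbb{R}\z$, one has $\det(\w,\x)=\pm1$; consequently $\x$ and $\mathbf{c}$ are primitive and the triangles $\Delta_{\x,\w}$ and $\Delta_{\w,\mathbf{c}}$ have area $\tfrac12$, hence no interior lattice point. Relation~ii) of the definition of $\A$, valid in $\tA$ in rank $<r$, then yields the \emph{exact} factorizations $\ut_{\x+\w}=-\epsilon_{\x,\w}[\ut_\x,\ut_\w]$ and $\ut_{\w+\mathbf{c}}=-\epsilon_{\w,\mathbf{c}}[\ut_\w,\ut_{\mathbf{c}}]$, with no lower-order correction precisely because those triangles are empty. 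Using $\det(\w,\z)=0$ one checks $\det(\w,\mathbf{c})=\det(\x,\w)$, so $\epsilon_{\x,\w}=\epsilon_{\w,\mathbf{c}}$. Substituting and applying the Jacobi identity, the two sides of~(\ref{E:minprop}) differ by exactly
\[
[\ut_\x,\ut_{\w+\mathbf{c}}]-[\ut_{\x+\w},\ut_{\mathbf{c}}]=-\epsilon_{\x,\w}\,[\ut_\w,[\ut_\x,\ut_{\mathbf{c}}]].
\]

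It therefore remains to prove the single identity $[\ut_\w,[\ut_\x,\ut_{\mathbf{c}}]]=0$ in $\tA$. Since $\x+\mathbf{c}=\z-\w=(\mathrm{deg}(\z)-1)\w$, the inner bracket has weight a multiple of $\w$, so this is a collinear-commutativity statement at the critical combined rank $r$ that is \emph{not} covered by the induction hypothesis; it is the sole point at which the quadratic functional equations fail and the cubic relation~(\ref{E:NEW}) is required. When $\mathrm{deg}(\z)=2$ one has $\x+\mathbf{c}=\w$, the triangle $\Delta_{\x,\mathbf{c}}$ is empty and $[\ut_\x,\ut_{\mathbf{c}}]$ is a scalar multiple of $\ut_\w$, so the identity is the trivial $[\ut_\w,\ut_\w]=0$. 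When $\mathrm{deg}(\z)=3$ the pair $(\x,\mathbf{c})$ again has no interior lattice point; relation~ii) together with~(\ref{E:formulatheta}) identifies $[\ut_\x,\ut_{\mathbf{c}}]$ with a combination of $\ut_{2\w}$ and $\ut_\w^2$, and the identity reduces to $[\ut_\w,\ut_{2\w}]=0$. If moreover $\w$ has rank one, then $\w$, $2\w$ and a minimal factorization of $\ut_{2\w}$ are all coefficients of $\TT_1$, and this is precisely~(\ref{E:NEW}), up to the unipotent symmetry~(\ref{E:unipsym}).

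The heart of the proof, and the main obstacle, is the general case, where $\mathrm{deg}(\z)$ is arbitrary and $\w$ may have rank $>1$. Here I would expand $[\ut_\x,\ut_{\mathbf{c}}]$ in the convex basis via the Corollary following Lemma~\ref{L:cv}, whose leading term is a multiple of $\ut_{(\mathrm{deg}(\z)-1)\w}$ and whose remaining terms are attached to strictly shorter convex paths, and run a secondary induction (on $\mathrm{deg}(\z)$ and on the rank of $\w$) in which the collinear leading contribution is killed against $\ut_\w$ by the smaller instances of~(\ref{E:NEW}), while the shorter contributions are reorganized, using the quadratic relations and the primary induction hypothesis, into strictly simpler terms. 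Setting up a complexity measure under which this reduction terminates, and checking that~(\ref{E:NEW}) suffices to annihilate every collinear leading term (including when $\w$ is not of rank one, where one must first pass to a rank-one direction), is the delicate and essential part; by contrast the determinant and sign bookkeeping of the earlier steps ($\epsilon_{\x,\y}=\mathrm{sign}\det(\x,\y)$ and the Jacobi step) is entirely routine.
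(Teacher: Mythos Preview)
Your reduction to consecutive minimal paths and the Jacobi step are correct, and in fact this is exactly what the paper does as well: writing $a_i=[\ut_{\x_i},\ut_{\z-\x_i}]$ for the minimal paths indexed along $L'$, your computation shows $a_i-a_{i+1}=-\epsilon_{\x,\w}\,[\ut_{\w},[\ut_{\x},\ut_{\mathbf{c}}]]$, which (since $(\x,\mathbf{c})$ is itself a minimal path of weight $(l-1)\w$ and hence $[\ut_\x,\ut_\mathbf{c}]=\tfrac{1}{\a_1}\underline{\theta}_{(l-1)\w}$ by the primary induction hypothesis) says precisely that the consecutive differences are all equal to one and the same quantity, namely a scalar multiple of $[\ut_{\w},\ut_{(l-1)\w}]$. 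This is the same arithmetic-progression relation the paper obtains by applying $\mathrm{ad}(\ut_{\w})$ to the level-$(l-1)$ equalities.

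The gap is in your ``secondary induction'' that is supposed to kill this common value. The quantity $[\ut_{\w},\ut_{(l-1)\w}]$ lives in rank $r$ and is \emph{not} a smaller instance of anything: unwinding it via any minimal-path expression for $\ut_{(l-1)\w}$ just returns $a_i-a_{i+1}$, so the proposed reduction is circular. The relations~(\ref{E:NEW}) are rank-three identities in the $\ut_{1,\bullet}$; they do not by themselves give $[\ut_{(1,0)},\ut_{(l-1,0)}]=0$ for $l>3$, and your sketch provides no mechanism by which they would. What is missing is a genuinely \emph{second} linear relation among the $a_i$. The paper supplies it by applying $\mathrm{ad}(\ut_{2\w})$ (not only $\mathrm{ad}(\ut_{\w})$) to the equalities at level $l-2$; the point is that on the right-hand side $[\ut_{2\w},\underline{\theta}_{(l-2)\w}]$ only involves collinear brackets of total rank $\le r-\mathrm{rk}(\w)<r$, hence vanishes by the primary induction hypothesis, producing a relation of the form $a_i-a_{i+2}=0$ which, combined with the arithmetic progression, forces all $a_i$ equal. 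For $\w$ of rank $\ge 2$ the paper uses the same trick with $\mathrm{ad}(\ut_{\w})$ and $\mathrm{ad}(\ut_{2\w})$. In short, your Jacobi step gives one equation in the unknowns $a_1,\dots,a_{l-1}$; you need a second, independent one, and that is what your proposal does not provide.
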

\begin{proof} Set $l=deg(\z)$ and $\w=\frac{1}{l}\z$. We first assume that $\w=(1,u)$ for some $u \in \Z$. Acting by the unipotent group ${\tiny{{\begin{pmatrix} 1 & \Z \\ 0 & 1 \end{pmatrix}}}}$, we may further assume that $u=0$, i.e. that $\z=(l,0)$. The minimal paths of weight $\z$ are then the paths $\big((i,1),(l-i,1)\big)$ for $i=1, \ldots, l-1$. If $l=3$ then applying $ad(\ut_{1,0})$ to the equality 
$[\ut_{1,1},\ut_{1,-1}]=\frac{1}{\a_1} \underline{\theta}_{2,0}$
and using the relations
\begin{equation}\label{E:two}
[\ut_{1,0},\ut_{1,1}]=-\ut_{2,1}, \qquad [\ut_{1,0},\ut_{1,-1}]=\ut_{2,-1}, \qquad [\ut_{1,0}, \ut_{2,0}]=0
\end{equation}
we obtain $-[\ut_{2,1},\ut_{1,-1}]+[\ut_{1,1},\ut_{2,-1}]=0$, which is (\ref{E:minprop}) in this case. Note that the last equality in (\ref{E:two}) holds by
virtue of (\ref{E:NEW}). Suppose now that $l>3$. By the induction hypothesis, $[\ut_{i,1},\ut_{l-1-i,-1}]=[\ut_{i-1,1},\ut_{l-i,-1}]$ for all $i=1, \ldots, l-2$. Applying 
$ad(\ut_{1,0})$ and using the relations $[\ut_{1,0},\ut_{i,1}]=-\ut_{i+1,1}, [\ut_{1,0},\ut_{i+1,-1}]=\ut_{i+1,-1}$ for all $i=1, \ldots, l-2$ we get
\begin{equation}\label{E:three}
[\ut_{i,1},\ut_{l-i,-1}]-[\ut_{i+1,1},\ut_{l-i-1,-1}]=[\ut_{i+1,1},\ut_{l-i-1,-1}]-[\ut_{i+2,1},\ut_{l-i-2,-1}]
\end{equation}
for $i=1, \ldots, l-3$. We need one more relation in order to be able to deduce that $[\ut_{i,1}\ut_{l-i,-1}]=[\ut_{j,1},\ut_{l-j,-1}]$ for all $i,j =1, \ldots, l-1$. If $l=4$ 
then we get this relation by applying $ad(\ut_{2,0})$ to the equality $[\ut_{1,1},\ut_{1,-1}]=\frac{1}{\a_1}\underline{\theta}_{2,0}$; if $l>4$ then we likewise get
this missing relation by applying $ad(\ut_{2,0})$ to
$[\ut_{i,1}\ut_{l-2-i,-1}]=[\ut_{j,1},\ut_{l-2-j,-1}]$ for all $i,j =1, \ldots, l-3$. This proves Proposition~\ref{P:defmin} when $\w$ is of rank one. The cases with $rk(\w) \geq 2$ may be dealt with in a similar fashion. If $l=1$ then there is a unique minimal path of weight $\z$ and there is nothing to prove. If $l=2$ then there are two minimal paths
$\p=(\x,\z-\x), \p'=(\x+\w, \w-\x)$; we have $[\ut_{\x},\ut_{\w}]=\ut_{\x+\w}, [\ut_{\w-\x},\ut_{\w}]=-\ut_{\z-\x}$ and
\begin{equation}\label{E:deffin}
[\ut_{\x}, \ut_{\w-\x}]=\ut_{\w}.
\end{equation}
The relation $[\ut_{\x}, \ut_{\z-\x}]=[\ut_{\x+\w},\ut_{\w-\x}]$ follows from applying $ad(\ut_{\w})$ to (\ref{E:deffin}). The cases with $l \geq 3$ are deduced exactly as
fin the situation $\w=(1,0)$ above~: we apply $ad(\ut_{\w})$ and $ad(\ut_{2\w})$ to the relevant equalities for $l-1$ and $l-2$. We leave the details to the reader.
\end{proof}

\vspace{.1in}

The above Proposition allows us to unambiguously define an element $\underline{\theta}_{(r,d)} \in \tA{}^>[r]$ for every $d \in \Z$ by setting
\begin{equation}
\underline{\theta}_{(r,d)}=\alpha_1 [\ut_{\x}, \ut_{(r,d)-\x}]
\end{equation}
for any minimal path $\p=(\x,(r,d)-\x)$. 

\vspace{.2in}

\paragraph{\textbf{5.6.}} If $\mathbf{s}=(\x_1, \ldots, \x_l)$ is a sequence in $(\Z^2)^>$ satisfying $rk(\x_i) \leq r$ for all $i$ then we put $\ut_{\mathbf{s}}=\ut_{\x_1} \cdots \ut_{\x_l}$. Clearly, $\tA{}^>[r]$ is linearly spanned by such elements $\ut_{\mathbf{s}}$--it suffices for instance to take all $\x_i$ of the form $(1,l_i)$. Recall from Section~1.2. the equivalence relation defined on the set of sequences.

\vspace{.1in}

\begin{prop}\label{P:seq} If $\mathbf{s}, \mathbf{s}'$ are two equivalent sequences in $(\Z^2)^>$ of weight $(r,d)$ then $\ut_{\mathbf{s}}=\ut_{\mathbf{s}'}$.
\end{prop}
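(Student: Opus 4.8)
The plan is to reduce Proposition~\ref{P:seq} to the statement that any two collinear segments commute, and then to prove that commutativity by a case distinction on total rank. Since the equivalence relation on sequences is generated by transpositions of adjacent entries of equal slope, it is enough to show that if $\x,\x'\in(\Z^2)^>$ are collinear and occur consecutively inside a sequence of weight $(r,d)$, then $[\ut_{\x},\ut_{\x'}]=0$; the general equality $\ut_{\mathbf{s}}=\ut_{\mathbf{s}'}$ then follows by composing such moves. Two regimes occur, according to whether $rk(\x)+rk(\x')<r$ or $=r$.

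When $rk(\x)+rk(\x')=s<r$, the elements $\ut_{\x}=\phi_{rk(\x)}^{-1}(u_{\x})$ and $\ut_{\x'}=\phi_{rk(\x')}^{-1}(u_{\x'})$ are furnished by the induction hypothesis, so $[\ut_{\x},\ut_{\x'}]\in\tAg[s]$ with $s<r$. Applying $\phi_s$, which is injective by induction, and using that collinear segments commute in $\A$ (relation i) of the definition), we get $\phi_s([\ut_{\x},\ut_{\x'}])=[u_{\x},u_{\x'}]=0$, hence $[\ut_{\x},\ut_{\x'}]=0$. This disposes of every transposition occurring in a sequence of length $\geq 3$, where the two swapped segments cannot exhaust the rank.

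The essential case is $rk(\x)+rk(\x')=r$: the sequence is then $(\x,\x')$ with $\x+\x'=(r,d)$, both collinear to the primitive vector $\w=\tfrac{1}{l}(r,d)$, $l=deg(r,d)$; write $\x=a\w$, $\x'=b\w$ with $a+b=l$, $1\leq a\leq b$. If $a=b$ there is nothing to show, and otherwise $b\geq 2$. I would first prove $[\ut_{a\w},\underline{\theta}_{b\w}]=0$, where $\underline{\theta}_{b\w}=\a_1[\ut_{\y},\ut_{b\w-\y}]$ is given by any minimal path $(\y,b\w-\y)$ of weight $b\w$ (such a path exists since $deg(b\w)=b\geq 2$, by the argument of Proposition~\ref{P:minexist}, and its middle vertex $\y$ lies on the lattice line $L'$ immediately above the ray $\mathbb{R}\w$). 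Bracketing with $\ut_{a\w}$ and applying the Jacobi identity gives
\begin{equation*}
[\ut_{a\w},[\ut_{\y},\ut_{b\w-\y}]]=[[\ut_{a\w},\ut_{\y}],\ut_{b\w-\y}]+[\ut_{\y},[\ut_{a\w},\ut_{b\w-\y}]].
\end{equation*}
Because $\y$ lies on $L'$ and $b\w-\y$ on the lattice line immediately below the ray (by Proposition~\ref{P:minchar} both are primitive), the triangles $\Delta_{\y,a\w}$ and $\Delta_{b\w-\y,a\w}$ contain no interior lattice point — they are squeezed between two adjacent parallel lattice lines — so relation ii) turns each inner commutator into a single term: $[\ut_{a\w},\ut_{\y}]=\epsilon_{\y,a\w}\,\ut_{\y+a\w}$ and $[\ut_{a\w},\ut_{b\w-\y}]=\epsilon_{b\w-\y,a\w}\,\ut_{(r,d)-\y}$. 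The two resulting outer brackets $[\ut_{\y+a\w},\ut_{b\w-\y}]$ and $[\ut_{\y},\ut_{(r,d)-\y}]$ are both minimal paths of weight $(r,d)$ (their middle vertices $\y+a\w$ and $\y$ lie on $L'$ inside the strip), hence both equal $\tfrac1{\a_1}\underline{\theta}_{(r,d)}$ by Proposition~\ref{P:defmin}; and since $\y$ and $b\w-\y$ sit on opposite sides of the ray, $\epsilon_{\y,a\w}=-\epsilon_{b\w-\y,a\w}$, so the two terms cancel and $[\ut_{a\w},\underline{\theta}_{b\w}]=0$. Finally, equation~(\ref{E:formulatheta}) on the ray gives $\underline{\theta}_{b\w}=\a_b\ut_{b\w}+(\text{products of }\ut_{c\w}\text{ with }c<b)$, and bracketing the lower products with $\ut_{a\w}$ produces only commutators of total rank $<r$, which vanish by the previous case; as $\a_b\neq 0$ this yields $[\ut_{a\w},\ut_{b\w}]=0$. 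The case $rk(\w)\geq 2$ is identical, expanding the larger factor through a minimal path and invoking Proposition~\ref{P:defmin} at weight $(r,d)$.

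The main obstacle is this full-rank collinear case, the one place where injectivity of $\phi_{<r}$ is unavailable and the vanishing must be produced intrinsically in $\tAg[r]$. The mechanism above is what makes it work: the minimal-path expansion of $\underline{\theta}_{b\w}$ together with the Jacobi identity rewrites the commutator as a difference of two minimal-path brackets of the same weight $(r,d)$, which are equal by Proposition~\ref{P:defmin}; the clean cancellation then comes from the geometric fact that the two segments of a minimal path lie on opposite sides of the ray, forcing the structure constants $\epsilon_{\y,a\w}$ and $\epsilon_{b\w-\y,a\w}$ to be opposite. The points requiring care are the verification that all the triangles involved are lattice-thin (so that relation ii) applies and yields single terms) and the passage from $\underline{\theta}_{b\w}$ back to $\ut_{b\w}$ via~(\ref{E:formulatheta}).
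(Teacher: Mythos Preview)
Your proof is correct and follows essentially the same route as the paper's. Both arguments reduce to the full-rank collinear case, expand the larger collinear factor $b\w$ (the paper's $\x_1$) via a minimal path, apply the Jacobi identity, and use Proposition~\ref{P:defmin} to identify the two resulting rank-$r$ brackets; the passage from $[\ut_{a\w},\underline{\theta}_{b\w}]=0$ back to $[\ut_{a\w},\ut_{b\w}]=0$ via~(\ref{E:formulatheta}) is also identical. The only cosmetic difference is that the paper singles out $r=3$ and invokes~(\ref{E:NEW}) directly there, whereas you treat all $r\geq 3$ uniformly through Proposition~\ref{P:defmin} (whose proof already absorbs~(\ref{E:NEW}) in its $l=3$ subcase); your remark that the case $rk(\w)\geq 2$ is ``identical'' is in fact unnecessary, since your argument nowhere used $rk(\w)=1$.
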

\begin{proof} If $l(\mathbf{s}) >2$ then the weight of any subsequence $(\x_i, \x_{i+1})$ is of rank $r'<r$. Hence if
$\alpha(\x_i)=\alpha(\x_{i+1})$ then $\ut_{\x_i}\ut_{\x_{i+1}}=\ut_{\x_{i+1}}\ut_{\x_i}$ because $\phi_{r'}$ is an isomorphism. Therefore $\ut_{\mathbf{s}}=\ut_{\mathbf{s}'}$. Therefore the only case which we need to consider is
that of $\mathbf{s}=(\x_1, \x_2)$ with $\x_1 \neq \x_2$ while $\alpha(\x_1)=\alpha(\x_2)$. If $r=3$ then necessarily
$\{\x_1, \x_2\}=\{(1,l), (2,2l)\}$ for some $l \in \Z$, and the fact that $[\ut_{(1,l)}, \ut_{(2,2l)}]=0$ is a consequence of relation (\ref{E:NEW}). For a general $r$, choose a minimal path $(\z,\x_1-\z)$ of weight $\x_1$. We have $\underline{\theta}_{\x_1}=\a_1[\ut_\z, \ut_{\x_1-\z}]$. The two paths $\p'=(\z, \x_1+\x_2-\z)$ and
$\p''=(\z+\x_2,\x_1-\z)$ being both minimal of weight $\x_1+\x_2$ we have $[\ut_{\z},\ut_{\x_1+\x_2-\z}]=[\ut_{\z+\x_2},\ut_{\x_1-\z}]$ by Proposition~\ref{P:defmin}.

\centerline{
\begin{picture}(25,20)
\put(8,9){\Small{$\mathbf{z}$}}
\multiput(0,0.5)(4,0){7}{\circle*{.2}}
\multiput(0,4.5)(4,0){7}{\circle*{.2}}
\multiput(0,8.5)(4,0){7}{\circle*{.2}}
\multiput(0,12.5)(4,0){7}{\circle*{.2}}
\multiput(0,16.5)(4,0){7}{\circle*{.2}}
\put(0,4.5){\circle*{.3}}
\put(-0.75,3.5){\tiny{$(0,0)$}}
\thicklines
\put(0,4.5){\line(3,1){24}}
\put(0,4.5){\line(2,1){8}}
\put(0,4.5){\line(5,2){20}}
\put(8,8.5){\line(4,1){16}}
\put(24,12.5){\circle*{.3}}
\put(8,8.5){\circle*{.3}}
\put(19.75,13){\Small{$\mathbf{z}+\mathbf{x}_2$}}
\put(20,12.5){\circle*{.3}}
\put(12,8.5){\circle*{.3}}
\put(11.75,7.5){\Small{$\mathbf{x}_1$}}
\put(20,12.5){\line(1,0){4}}
\put(23.75,11.5){\Small{$\mathbf{x}_1+\mathbf{x}_2$}}
\end{picture}}

\vspace{.1in}

\centerline{Figure 7. The two minimal paths $\mathbf{p}$ and $\mathbf{p}'$.}

\vspace{.1in}

But then
\begin{equation}\label{E:four}
\begin{split}
[\underline{\theta}_{\x_1},\ut_{\x_2}]&=\a_1 \big( [[\ut_{\z},\ut_{\x_1-\z}],\ut_{\x_2}]\big)\\
&=\a_1 \big( [\ut_{\z},[\ut_{\x_1-\z},\ut_{\x_2}]] + [[\ut_{\z},\ut_{\x_2}],\ut_{\x_1-\z}]\big)\\
&=\a_1 \big( -[\ut_{\z},\ut_{\x_1+\x_2-\z}] + [\ut_{\z+\x_2},\ut_{\x_1-\z}]\big)=0.
\end{split}
\end{equation}
Since $\underline{\theta}_{\x_1}=\a_{deg(\x_1)} \ut_{\x_1} +h$ with $h$ being a linear combination of paths in $\mathbb{R}\x_1$ of length at least two, we
deduce from (\ref{E:four}) that $[\ut_{\x_1},\ut_{\x_2}]=0$ as wanted.
\end{proof}

Using the above Proposition we may now unambiguously define an element $\ut_{\p} \in \tA{}^>[r]$ for every \textit{path} $\mathbf{p}$ in $(\Z^2)^>$ of weight $(r,d)$. 
Indeed, if $\mathbf{p}=(\x_1, \ldots, \x_l)$ with $l \geq 2$ then we set $\ut_{\p}=\ut_{\x_1} \cdots \ut_{\x_l}$. If $l=1$, i.e. if $\p=(r,d)$ then one may use the relation (\ref{E:formulatheta}) and the construction of $\underline{\theta}_{(r,d)}$ provided by Prop.~\ref{P:defmin} in order to define an element $\ut_{r,d}$.

\vspace{.2in}

\paragraph{\textbf{5.7.}} After having constructed a canonical element $\ut_{\p} \in \tA{}^>[r]$ for every path $\p$ of weight $(r,d)$, we may proceed with the proof of the induction step.
We have
$$\A^{>}[r,d]= \bigoplus_{\p \in C} \mathbf{K} u_{\p},$$
where $C$ is the set of convex paths $\p \in  \textbf{Conv}^{>} $ of weight $(r,d)$.
We denote by $\widetilde{J}$ the subspace of $\tA{}^>[r,d]$ spanned by the $\ut_{\p}$ for $\p \in C$. By construction, $\phi_r$ restricts to an isomorphism $\widetilde{J} \stackrel{\sim}{\to} \A^>[r,d]$. Hence it suffices to show that $\widetilde{J}=\tA{}^>[r,d]$. 
In other words, we have to prove that every path $\p$ (not necessarily convex) we have
\begin{equation}\label{E:PPP1}
\ut_{\p} \in \widetilde{J}.
\end{equation}
Equation (\ref{E:PPP1}) will be established by an induction on the area $a(\p)$. If $a(\p)=0$ then by Lemma~\ref{L:ap} i) the path $\p$ is convex and hence $\ut_{\p} \in \widetilde{J}$ by definition. Now let $\p \in \textbf{Path}^{>}$ be as above and let us assume that (\ref{E:PPP1}) holds for any $\p'$ with $a(\p')<a(\p)$. At this point we break up the argument into several cases~:

\vspace{.15in}

\noindent
\textit{Case i)} Assume $l(\p) >2$. Then there exists a nonconvex subpath of length $2$, say $(\x_i, \x_{i+1})$. Since $l(\p)>2$ we have $\x_i + \x_{i+1} =(r',d')$ with $r'<r$. Because $\phi_{r'}$ is an isomorphism we have, using Lemma~\ref{L:cv}
$$\ut_{\x_i}\ut_{\x_{i+1}}=\phi_{r'}^{-1}(u_{\x_i}u_{\x_{i+1}})=\phi_{r'}^{-1}\big(\sum_{\mathbf{q}}\beta_{\mathbf{q}} u_{\mathbf{q}}\big)=\sum_{\mathbf{q}}\beta_{\mathbf{q}} \ut_{\mathbf{q}}$$
where $\mathbf{q}$ runs over a certain (finite) set of paths in $\textbf{Conv}^{>}$ of weight $(r',d')$. Therefore, $\ut_{\p}=\sum_{\mathbf{q}} \beta_{\mathbf{q}}\ut_{\mathbf{q}'}$ where by definition $\mathbf{q}'$ is the concatenation of $(\x_1, \ldots, \x_{i-1}), \mathbf{q}$ and $(\x_{i+2}, \ldots, \x_{l(\mathbf{p})})$. By Lemma~\ref{L:ap} iii) we have $a(\mathbf{q}') < a(\mathbf{p})$ for all paths $\mathbf{q}'$ appearing in this fashion, and we may use the induction hypothesis to deduce that $\ut_{\p}$ belongs to $\widetilde{J}$.

\vspace{.15in}

\noindent
\textit{Case ii)} Assume that $l(\p)=2$, i.e. that $\p=(\x, (r,d)-\x)$. Let $R$ be the region (parallelogram) in $(\Z^2)^+$ bounded by the segment  $[0,(r,d)]$, its parallel segment going through $\x$ and the two vertical lines going through $(0,0)$ and $(r,d)$. Let $\Delta$, $\Delta_{\x}$ and $\Delta'$ stand for the left, central and right triangles lying inside $R$ (see Figure~8 below). 

\centerline{
\begin{picture}(20,18)
\multiput(0,0.5)(4,0){7}{\circle*{.2}}
\multiput(0,4.5)(4,0){7}{\circle*{.2}}
\multiput(0,8.5)(4,0){7}{\circle*{.2}}
\multiput(0,12.5)(4,0){7}{\circle*{.2}}
\multiput(0,16.5)(4,0){7}{\circle*{.2}}
\put(16,12.5){\circle*{.3}}
\put(3.5,7.5){\tiny{$(0,0)$}}
\thicklines
\put(4,8.5){\line(3,1){12}}
\put(4,8.5){\circle*{.3}}
\put(4,8.5){\line(4,-1){16}}
\put(20,4.5){\circle*{.3}}
\put(15.5, 13){\Small{$\mathbf{x}$}}
\put(6, 12.75){\Small{${\Delta}$}}
\put(12.5, 9.75){\Small{${\Delta_{\x}}$}}
\put(17.75, 9.75){\Small{${\Delta}'$}}
\put(16,12.5){\line(1,-2){4}}
\put(16,12.5){\line(4,-1){4}}
\put(20,4.5){\line(0,1){7}}
\put(4,8.5){\line(0,1){7}}
\put(4,15.5){\line(4,-1){12}}
\put(19.25,3.75){\Small{$(r,d)$}}
\end{picture}}

\vspace{.1in}

\centerline{Figure 8. The region $R$ and the three triangles $\Delta$, $\Delta'$ and $\Delta_\mathbf{x}$.}

\vspace{.1in}

We break again this case into several distinct ones~:

\vspace{.1in}

\noindent
\textit{Case ii) a).} Assume that there is a lattice point $\y$ in one of the two triangles $\Delta$ and $\Delta'$. We allow this point to lie on the left boundary of ${\Delta}$, resp. on the right boundary of ${\Delta}'$, but not on the boundary in common with ${\Delta}_\x$ nor on the top boundary. Without loss of generality, we may assume that $\y \in \Delta$, the other case $\y \in \Delta'$ being completely identical.
Consider the path $\mathbf{q}=(\y, (r,d)-\y)$. By construction we have 
$$a(\mathbf{q})=|det(\y, (r,d)-\y)|=|det(\y,(r,d))| < |det(\x,(r,d))|=a(\p)$$
and hence $\ut_{\mathbf{q}} \in \widetilde{J}$ by our induction hypothesis. Consider also the path $\mathbf{n}=(\y,\x-\y, (r,d)-\x)$. Note that $\mathbf{n}$ is entirely contained in the region $R$. We will show that
\begin{equation}\label{E:P41}
\ut_{\mathbf{p}} \equiv \alpha\ut_{\mathbf{n}} \equiv \alpha' \ut_{\mathbf{q}}\;(\text{mod}\;(\widetilde{J}))
\end{equation}
for some $\alpha, \alpha' \in \mathbf{K}$, thereby proving that $\ut_{\p} \in \widetilde{J}$.

\vspace{.05in}

First of all, we have by Lemma~\ref{L:cv} 
$$u_{\y}u_{\x-\y} = \beta u_{\x} + \sum_{\mathbf{m}} \beta_{\mathbf{m}} u_{\mathbf{m}}$$
where $\beta \neq 0$ and where $\mathbf{m}$ runs among the set of convexifications of $(\y,\x-\y)$ which are distinct from $(\x)$. Since $\phi_{r'}$ is an isomorphism for all $r'<r$, the same equation holds in $\widetilde{\U}^{>}_{\v,\t}$ (i.e. with $u$s replaced by $\ut$s). As a consequence, we have
\begin{equation}\label{E:P51}
\begin{split}
\ut_{\mathbf{n}}&=\ut_{\y}\ut_{\x-\y}\ut_{(r,d)-\x}\\
&=\beta \ut_{\x}\ut_{(r,d)-\x} + \sum_{\mathbf{m}} \beta_{\mathbf{m}} \ut_{\mathbf{m}} \ut_{(r,d)-\x}\\
&=\beta \ut_{\p} + \sum_{\mathbf{m}} \beta_{\mathbf{m}} \ut_{\mathbf{m}'}
\end{split}
\end{equation}
where $\mathbf{m}'$ is the concatenation of $\mathbf{m}$ and $((r,d)-\x)$. Observe that any segment $\mathbf{z}_i$ appearing in a convexification $\mathbf{m}$ of $(\y,\x-\y)$ satisfies $\alpha(\mathbf{z}_i) \geq \alpha(\x-\y) \geq \alpha ((r,d)-\x)$ (see Figure~9 below). It follows that for all such $\mathbf{m}$ we have $a(\mathbf{m}')=a((\x,(r,d)-\x))=a(\p)$. However, by construction we also have $l(\mathbf{m}')>2$ and hence by Case i) above $\ut_{\mathbf{m}'} \in \widetilde{J}$. This proves the first congruence in (\ref{E:P41}). 

\centerline{
\begin{picture}(20,20)
\multiput(0,0.5)(4,0){7}{\circle*{.2}}
\multiput(0,4.5)(4,0){7}{\circle*{.2}}
\multiput(0,8.5)(4,0){7}{\circle*{.2}}
\multiput(0,12.5)(4,0){7}{\circle*{.2}}
\multiput(0,16.5)(4,0){7}{\circle*{.2}}
\put(16,12.5){\circle*{.3}}
\put(8,12.5){\circle*{.3}}
\put(3.5,7.5){\tiny{$(0,0)$}}
\put(8,13){\Small{$\y$}}
\put(11.5,9.5){\Small{$\mathbf{m}$}}
\put(4,8.5){\line(1,1){4}}
\thicklines
\put(4,8.5){\line(3,1){12}}
\put(4,8.5){\line(1,0){4}}
\put(8,12.5){\line(1,0){8}}
\put(8,8.5){\line(2,1){8}}
\put(4,8.5){\circle*{.3}}
\put(4,8.5){\line(4,-1){16}}
\put(20,4.5){\circle*{.3}}
\put(15.5, 13){\Small{$\mathbf{x}$}}
\put(16,12.5){\line(1,-2){4}}
\put(16,12.5){\line(4,-1){4}}
\put(20,4.5){\line(0,1){7}}
\put(4,8.5){\line(0,1){7}}
\put(4,15.5){\line(4,-1){12}}
\put(19.25,3.75){\Small{$(r,d)$}}
\end{picture}}

\vspace{.1in}

\centerline{Figure 9. A lattice point $\mathbf{y}$ in $\Delta$ and a convexification $\mathbf{m}$ of $(\y,\x-\y)$.}

\vspace{.15in}

The second congruence is proved in a similar fashion. Using Lemma~\ref{L:cv} again, we have
\begin{equation}\label{E:P512}
\begin{split}
\ut_{\mathbf{n}}&=\ut_{\y}\ut_{\x-\y}\ut_{(r,d)-\x}\\
&=\beta' \ut_{\y}\ut_{(r,d)-\y} + \sum_{\mathbf{m}} \beta'_{\mathbf{m}} \ut_{\y}\ut_{\mathbf{m}} \\
&=\beta' \ut_{\mathbf{q}} + \sum_{\mathbf{m}} \beta'_{\mathbf{m}} \ut_{\mathbf{m}'}
\end{split}
\end{equation}
where $\beta' \neq 0$, $\mathbf{m}$ runs among the set of convexifications of $(\x-\y, (r,d)-\x)$ distinct from $((r,d)-\y)$, and where $\mathbf{m}'$ is the concatenation of $\y$ and $\mathbf{m}$. This time we have $\alpha(\mathbf{z}_i) \leq \alpha(\x-\y) \leq \alpha(\y)$ for all segments $\mathbf{z}_i$ appearing in the convexifications $\mathbf{m}$, and hence $a(\mathbf{m}') =a((\y, (r-d)-\y))=a(\mathbf{q}) < a(\p)$. We deduce that $\ut_{\mathbf{m}'} \in \widetilde{J}$ and the second congruence in (\ref{E:P41}) is proved.

\vspace{.1in}

\noindent
\textit{Case ii) b).} Assume that there are no interior points in either of the triangles $\Delta$, $\Delta'$. Observe that the union of $\Delta'$ and the translate of $\Delta$ by the vector $(r,d)$ is equal to a reflection of the central triangle $\Delta_{\x}$ in $R$. Hence the triangle $\Delta_{\x}$ has no interior lattice points. It may however have lattice points on its boundary. Assume first that it has a lattice point on its bottom boundary, i.e. that $(r,d)$ is not a primitive vector. Then, by symmetry, there exists a lattice point, say $\mathbf{z}$ on the top boundary of one of $\Delta$ or $\Delta'$. Consider the path $\mathbf{q}=(\z,(r,d)-\z)$. Arguing as in case ii a) above, using the path of length three connecting $(0,0)$, $\z$, $\x$ and $(r,d)$, we get thet $\ut_{\p} \equiv \alpha \ut_{\mathbf{q}}\;(mod\; \tilde{J})$ for some $\alpha \in \mathbf{K}$. By our assumption, the vectors $\z$ and $(r,d)-\z$ are both primitive, and there are no interior points in the triangle $\Delta_{\z,(r,d)-\z}$. Hence by Prop~\ref{P:minchar} the path $\mathbf{q}$ is minimal and therefore $\ut_{\mathbf{q}} \in \tilde{J}$. Next, let us assume that $deg((r,d))=1$. It is not possible to have $deg(\x)=deg((r,d)-\x)=2$, and if $deg(\x) \geq 2$ and $deg((r,d)-\x) \geq 3$ or if $deg(\x) \geq 3$ and $deg((r,d)-\x) \geq 2$ then there exists a lattice point in the interior of $\Delta_{\x}$, contrary to our assumption. Hence we have $deg(\x)=1$ or $deg((r,d)-\x)=1$. The two cases are symmetric, and so we will assume now that $deg(\x)=1$. The following picture gives an example of that situation~:

\centerline{
\begin{picture}(20,20)
\multiput(0,0.5)(4,0){7}{\circle*{.2}}
\multiput(0,4.5)(4,0){7}{\circle*{.2}}
\multiput(0,8.5)(4,0){7}{\circle*{.2}}
\multiput(0,12.5)(4,0){7}{\circle*{.2}}
\multiput(0,16.5)(4,0){7}{\circle*{.2}}
\put(0,12.5){\circle*{.3}}
\put(-0.5,11.5){\tiny{$(0,0)$}}
\thicklines
\put(0,12.5){\line(5,-2){20}}
\put(4,12.5){\line(5,-2){16}}
\put(0,12.5){\line(1,0){4}}
\put(4,12.5){\line(2,-1){16}}
\put(0,12.5){\line(0,1){1.6}}
\put(20,4.5){\line(0,1){1.6}}
\put(0,14.1){\line(5,-2){4}}
\put(4,12.5){\circle*{.3}}
\put(20,4.5){\circle*{.3}}
\put(3.5, 13){\Small{$\mathbf{x}$}}
\put(19.25,3.75){\Small{$(r,d)$}}
\end{picture}}

\vspace{.1in}

\centerline{Figure 10. The last case.}

\vspace{.1in}

Set $\w=\frac{1}{deg(\z-\x)}(\z-\x)$ and put $\y=\x-\w$. Let us write $\y=(s,l)$. Note that we may $s \leq 0$, but in any case $|s| <r$. Since
$\tA{}^<$ and $\tA{}^>$ are clearly isomorphic, our basic induction hypothesis that $\phi_{r'}: \tA{}^>[r'] \to \A{}^>[r']$ is an isomorphism for all $0 \leq r'<r$ holds for
$\tA{}^<$ as well, with $-r$ in place of $r$. Hence there exists $\ut_{\y} \in \tA$ satisfying $\phi(\ut_{\y})=u_{\y}$. Observe that, as there are no interior lattice
points in the triangles $\Delta, \Delta', \Delta_{\x}$ we have
\begin{equation}\label{E:last1}
[u_{\y},u_{\w}]=u_{\x}, \qquad [u_{\y},u_{\z-\x}]=u_{\z-\w}.
\end{equation}
We claim that
\begin{equation}\label{E:last2}
[\tu_{\y},\tu_{\w}]=\tu_{\x}, \qquad [\tu_{\y},\tu_{\z-\x}]=\tu_{\z-\w}.
\end{equation}
This follows directly from (\ref{E:last1}) and our induction hypothesis if $s \geq 0$. If $s <0$ note that by virtue of (\ref{E:5}) we have
$$[\tu_{\y}, \tu_{\x}], [\tu_{\y}, \tu_{\z-\x}] \in \bigoplus_{0 \leq, r',r'' <r} \tA{}^>[r'] \otimes \tA{}^<[-r''] =: V$$
while by our induction hypothesis the restriction of $\phi$ to $V$ is injective. From (\ref{E:last2}) we get
\begin{equation*}
\begin{split}
[\tu_{\x},\tu_{\z-\x}]=[[\tu_{\y},\tu_{\w}],\tu_{\z-\x}]=&[[\tu_{\y},\tu_{\z-\x}],\tu_{\w}] + [\tu_{\y},[\tu_{\w},\tu_{\z-\x}]]\\
&=[\tu_{\z-\w},\tu_{\w}].
\end{split}
\end{equation*}
But $(\z-\w,\w)$ is a minimal path hence $[\tu_{\z-\w},\tu_{\w}]=\frac{1}{\a_1}\underline{\theta}_{(r,d)} \in \widetilde{J}$. This proves that
$[\tu_{\x},\tu_{\z-\x}] \in \widetilde{J}$ as wanted. This finishes case ii) b), and brings the induction argument to a close. Theorem~\ref{T:3} is proved.
\qed

\vspace{.2in}

\section{Hopf algebra structure}

\vspace{.15in}

The elliptic Hall algebra $\A$ is naturally equipped with a coproduct, making it a (topological) bialgebra (see \cite{BS}). In terms of the Drinfeld generators, this coproduct takes the following form (see \cite[Lemma 4.1]{BS})~:
\begin{equation}
\begin{split}
\Delta(\mathbb{T}_1(z))&=\mathbb{T}_1(z) \otimes 1 + \mathbb{T}_0^+(z) \otimes \mathbb{T}_1(z),\\
\Delta(\mathbb{T}_{-1}(z))&=\mathbb{T}_{-1}(z) \otimes 1 + \mathbb{T}_{-1}(z) \otimes \mathbb{T}_0^-(z),\\
\Delta(\mathbb{T}_0^{\pm}(z))&=\mathbb{T}^{\pm}_0(z) \otimes \mathbb{T}_{0}^{\pm}(z).
\end{split}
\end{equation}

Note in particular that $\A^{\pm}$ are sub bialgebras of $\A$. The theory of Hall algebras also provides 
$\A^{\pm}$ with a (topological) antipode. We won't write it here. Finaly, note that the $SL(2,\Z)$-symmetry is broken if one takes the coproduct
into consideration; in other words, $SL(2,\Z)$ does not act by Hopf algebra automorphisms (however, the unipotent subgroup $\small{\begin{pmatrix}1 & \Z\\ 0 & 1\end{pmatrix}}$ does).

\vspace{.2in}

\centerline{\textbf{Acknowledgements}}

\vspace{.1in}

I would like to thank A. Tsymbaliuk for useful discussions and for pointing out the paper \cite{F2JM2} to me.

\vspace{.2in}

\small{}

\end{document}